\newcommand{\bb}[1]{\mathbb{#1}}
\newcommand{\boldalpha}{\mbox{\boldmath$\alpha$\unboldmath}}
\newcommand{\C}{\bb{C}}
\newcommand{\eps}{\epsilon}
\newcommand{\cur}[1]{\mathcal{#1}}
\renewcommand{\epsilon}{\varepsilon}
\renewcommand{\hom}{\mathrm{Hom}}
\newcommand{\ext}{\mathrm{Ext}}
\newcommand{\im}{\mathrm{Im}}
\newcommand{\K}{\widetilde{K}}
\newcommand{\lhra}{\lhook\joinrel\longrightarrow}
\newcommand{\lmt}{\longmapsto}
\newcommand{\lra}{\longrightarrow}
\newcommand{\notensor}[2]{#1#2}
\newcommand{\R}{\mathbb{R}}
\newcommand{\res}[2]{\mathrm{Res}^{#1}_{#2}\,}
\newcommand{\Smash}{\wedge}
\newcommand{\bin}[2]{
\left(
\begin{array}{c}
#1\\
#2
\end{array}
\right) }
\newcommand{\tensor}{\otimes}
\newtheorem{lemma}[equation]{Lemma}
\newtheorem{proposition}[equation]{Proposition}
\newtheorem{theorem}[equation]{Theorem}
\theoremstyle{definition}
\newtheorem{definition}[equation]{Definition}
\newtheorem{remark}[equation]{Remark}
\newtheorem{remarks}[equation]{Remarks}
\newtheorem{notation}[equation]{Notation}
\numberwithin{equation}{section}
\begin{document}

\title[Poincar\'{e} duality for $K^*_G(\C P(V))$]{Poincar\'{e} duality for $K$-theory of equivariant complex projective spaces}
\author{J.P.C.~Greenlees}
\address{J.P.C.~Greenlees, Department of Pure Mathematics, Hicks Building, Sheffield S3 7RH. UK.}\email{j.greenlees@sheffield.ac.uk}
\author{G.R.~Williams}
\address{G.R.~Williams, Department of Pure Mathematics, The Open
  University, Walton Hall, Milton Keynes, MK7 6AA. UK.} \email{g.r.williams@open.ac.uk}
\date{\today}
\maketitle

\begin{abstract}
  We make explicit {\em Poincar\'{e} duality} for the equivariant
 $K$-theory of equivariant complex projective spaces. The 
 case of the trivial group provides a new approach to the $K$-theory 
 orientation \cite{ja:shagh}.
\end{abstract}

\section{Introduction}
In \footnote{MSC 2000 57R91, 55N15, 55N91} well behaved cases one expects the cohomology of a finite complex to be
a contravariant functor of its  homology. However,  orientable manifolds
have the special property that the cohomology is covariantly isomorphic to
the homology, and hence in particular the cohomology ring is self-dual.
More precisely, Poincar\'e duality states that taking the cap product
with a fundamental class gives an isomorphism between homology and cohomology
of a manifold.

Classically, an $n$-manifold $M$ is a topological space locally modelled
on $\R^n$, and the fundamental class of $M$ is a homology
class in $H_n(M)$. Equivariantly, it is much less clear how things should
work. If we pick a point $x$ of a smooth $G$-manifold, the tangent space $V_x$
is a representation of the isotropy group $G_x$, and its $G$-orbit is
locally modelled on $G \times_{G_x} V_x$;
 both $G_x$ and  $V_x$ depend on the point $x$. It may happen that
we have a $W$-manifold, in the sense that there
is a single representation $W$ so that $V_x$ is the restriction of $W$ to
$G_x$ for all $x$, but this is very restrictive.
Even if there are fixed points $x$,
the representations $V_x$ at different points need not be equivalent.
It is therefore not clear even in which dimension we should hope to find a
fundamental class. In general one needs complicated apparatus to provide
a suitable context  \cite{CMW}, and ordinary cohomology is especially
complicated.  Fortunately, particular examples can be better behaved.

The purpose of the present
paper is to look at the very concrete example of linear complex projective
spaces: these are not usually $W$-manifolds for any $W$, but we observe that
in equivariant $K$-theory there is a natural choice of fundamental class, and
we make the resulting Poincar\'e duality isomorphism explicit. In the
non-equivariant case this gives an  elementary approach to the classical
$K$-theory fundamental class \cite{ja:shagh}.

\section{Preliminaries}

%We present some recollections about topics of interest to us, and fix our notations.

\subsection{Linear projective spaces.}

Let $V$ be a unitary complex representation of a finite group
$G$. We write $S(V)$ for the unit sphere, $D(V)$ for the unit
disc in $V$, and $S^V$ for the one-point compactification,
$S^V=D(V)/S(V)$. We write $T$ for the circle group
$T=\{\lambda\in\C~|~|\lambda|=1\}$ and $z$ for the natural
representation of $T$.

\begin{definition}
We write $\C P(V)$ for the $G$-space of complex lines in $V$,
so that
$$
\C P(V) \cong S(V\otimes z)/T.
$$
\end{definition}

\subsection{Equivariant stable homotopy theory}

Although our principal results are stated in terms of homology and
 cohomology, we often work in the  equivariant stable homotopy category.
We summarise some standard  results  (see \cite{ja:poeshfcl}, \cite{ll:esht} or \cite[XVI \S5]{jm:ehactdttmorjp} for details). The relevance arises since equivariant homology
and cohomology theories are represented by $G$-spectra in the sense that for based
$G$-spaces $X$,
$$\widetilde{E}_G^*(X) =[X, E]_G^* \mbox{ and } \widetilde{E}^G_*(X)=[S^0,E \Smash X]^G_*,$$
where $E$ is the representing $G$-spectrum of the theory.

\begin{lemma}[Change of groups \mbox{\cite[II.4.3 and II.6.5]{ll:esht}}]\label{lemma:change of group}
Let $H$ be a subgroup of $G$, and suppose that $A$ is an
$H$-spectrum and $B$ is a $G$-spectrum. Then there are natural
isomorphisms
$$
\theta:[A,B]_H\stackrel{\cong}{\lra}[G_+\Smash_H A,B]_G~\mathrm{and}~\phi:[B,A]_H\stackrel{\cong}{\lra}[B,G_+\Smash_H A]_G.
$$
\qed
\end{lemma}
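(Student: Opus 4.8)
The plan is to recognise both isomorphisms as instances of standard adjunctions between induction, coinduction and restriction, the only non-formal ingredient being the Wirthm\"uller identification of induction with coinduction for a finite-index subgroup.

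First I would treat $\theta$. The functor $G_+\Smash_H(-)$ from $H$-spectra to $G$-spectra is left adjoint to the restriction functor $\res{G}{H}$; the adjunction is already visible on based $G$-spaces, where a $G$-map $f\colon G_+\Smash_H A\to B$ corresponds to the $H$-map $a\mapsto f(1\Smash a)$, with inverse obtained by extending an $H$-map $\Smash$-equivariantly over $G$. Working in the equivariant stable category on a complete universe, so that $G_+\Smash_H(-)$ is the derived induction, this descends to the natural isomorphism $[G_+\Smash_H A,B]_G\cong[A,B]_H$, where as usual $B$ on the right denotes $\res{G}{H}B$. Naturality in $A$ and $B$ is immediate.

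For $\phi$ I would use that $\res{G}{H}$ preserves both limits and colimits, so that in addition to the left adjoint $G_+\Smash_H(-)$ it has a right adjoint, the coinduction $F_H(G_+,-)$; this gives $[B,F_H(G_+,A)]_G\cong[\res{G}{H}B,A]_H=[B,A]_H$. It therefore suffices to produce a natural $G$-equivalence $G_+\Smash_H A\simeq F_H(G_+,A)$. Here I would fix a set of coset representatives, write $G_+\cong\bigvee_{gH\in G/H}(gH)_+$ as a right $H$-spectrum with each summand $(gH)_+\cong H_+$, and so identify $G_+\Smash_H A\cong\bigvee_{gH}A$ and $F_H(G_+,A)\cong\prod_{gH}A$, in each case carrying the $G$-action that permutes the coset-indexed summands and acts through the appropriate conjugates on each. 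Since $G/H$ is finite, the canonical map from a finite wedge to the corresponding product is an equivalence of spectra, and one checks it is $G$-equivariant for these two actions; this is precisely the (shift-free) Wirthm\"uller isomorphism for a finite-index subgroup. Composing, $[B,G_+\Smash_H A]_G\cong[B,F_H(G_+,A)]_G\cong[B,A]_H$ defines $\phi$.

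The main obstacle is the middle step of the second part: making the identification $G_+\Smash_H A\simeq F_H(G_+,A)$ precise as a natural \emph{$G$-equivariant} equivalence rather than merely a non-equivariant one. Concretely, this requires checking that under the coset decomposition the left $G$-action on $\bigvee_{gH}A$ --- which both permutes summands and twists by the group elements needed to carry a translated coset representative back to the chosen one --- matches the corresponding action on $\prod_{gH}A$, and that the wedge-to-product comparison intertwines them; once this bookkeeping is done the rest is the formal adjunction calculus above. This is of course the content of the results cited from \cite{ll:esht}; the sketch above merely indicates the route.
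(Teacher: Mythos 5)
The paper itself gives no argument for this lemma --- it is quoted from Lewis--May--Steinberger with a \qed --- so the only comparison available is with the cited source, and your skeleton does match how the result is organised there: $\theta$ is the induction--restriction adjunction (II.4.3), and $\phi$ follows from the coinduction adjunction once one has the shift-free Wirthm\"uller equivalence $G_+\Smash_H A\simeq F_H(G_+,A)$ for a finite-index subgroup (II.6.2, from which II.6.5 is deduced). So the architecture of your proposal is right, and you correctly locate all the content in that middle equivalence.

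However, your proposed justification of that equivalence has a genuine gap. In the genuine $G$-equivariant stable category (complete universe), a map of $G$-spectra which is an equivalence of underlying nonequivariant spectra need \emph{not} be a $G$-equivalence: equivalences are detected by $\pi_*^K$ for \emph{all} subgroups $K\leq G$, and $EG_+\lra S^0$ is the standard counterexample to the criterion you invoke. Hence ``identify underlying spectra with $\bigvee_{G/H}A$ and $\prod_{G/H}A$, check the wedge-to-product map is $G$-equivariant'' is not the real obstacle and does not suffice; the bookkeeping you describe only controls $\pi_*^e$. Worse, if you try to repair it by computing fixed-point homotopy groups, the case $K=G$ requires knowing $\pi_*^G(G_+\Smash_H A)\cong\pi_*^H(A)$, which is precisely the $B=S$ instance of the isomorphism $\phi$ you are trying to prove, so the obvious patch is circular. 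The actual proof constructs the comparison $G$-map intrinsically (as the map adjoint, under the coinduction adjunction, to the projection of $\res{G}{H}(G_+\Smash_H A)$ onto its identity-coset summand) and shows it is an equivalence by a transfer-type argument using an embedding of $G/H$ in a representation --- equivalently, the equivariant self-duality of $G/H_+$; this is the non-formal input in LMS II.6.2. A smaller point in the same direction: $A$ is a spectrum indexed on an $H$-universe, so even your identification of the underlying spectrum of $G_+\Smash_H A$ with a finite wedge of copies of $A$ involves change-of-universe functors that need to be handled, not just a choice of coset representatives.
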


\begin{theorem}[Adams isomorphism \mbox{\cite[II.7.1]{ll:esht}}]
Suppose $B$ is  a $T$-free $(G\times T)$-spectrum. For any  $G$-spectrum $A$
there is a natural isomorphism
$$
[A,\Sigma B/T]_G\cong[ A,B]_{G\times T},
$$
induced by a suitable transfer map.
%Let $\cur{U}$ be a complete $(G\times T)$-universe. Let $A$ be a $T$-fixed $(G\times T)$-spectrum indexed on $\cur{U}^T$ and let $B$ be a $T$-free $(G\times T)$-spectrum also indexed on $\cur{U}^T$. Then
%$$
%[A,\Sigma B/T]_G\cong[i_*A,i_*B]_{G\times T},
%$$
%where the left hand side is understood in the universe $\cur{U}^T$, the right hand side in $\cur{U}$ and $i$ is the inclusion $\cur{U}^T\lhra\cur{U}$.
\qed
\end{theorem}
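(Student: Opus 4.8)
\emph{The comparison map.} The plan is to realise the isomorphism as the map induced by a single natural transfer map and then to verify it by a cell induction on $B$. Write $\epsilon\colon G\times T\to G$ for the projection and $\epsilon^*$ for inflation, so that $[A,B]_{G\times T}$ in the statement means $[\epsilon^*A,B]_{G\times T}$. Since $B$ is $T$-free the quotient map $B\to B/T$ behaves like a principal $T$-bundle of $(G\times T)$-spectra, and it carries a dimension-shifting transfer of Pontryagin--Thom type, built by a fibrewise one-point compactification of its circle fibres. Because $T$ is one-dimensional --- equivalently, its adjoint representation is the trivial line $\mathrm{Ad}(T)\cong\R$ --- this transfer shifts dimension by exactly one, which is the source of the suspension $\Sigma$ in the statement; composing a $G$-map $f\colon A\to\Sigma(B/T)$ with (the inflation of) this transfer $\tau$ produces the required element of $[\epsilon^*A,B]_{G\times T}$, and naturality in both variables is then formal. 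I expect constructing $\tau$ precisely --- an equivariant tubular-neighbourhood argument for the family of circle fibres, carried out with the correct passage between indexing universes, as $B/T$ is naturally indexed on the $T$-fixed universe while $B$ lives on the ambient complete $(G\times T)$-universe --- to be the main obstacle.

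\emph{Reduction to free cells.} Both $B\mapsto[A,\Sigma(B/T)]_G$ and $B\mapsto[\epsilon^*A,B]_{G\times T}$ are exact functors on the category of $T$-free $(G\times T)$-spectra: the orbit functor preserves cofibre sequences of $T$-free spectra, and $[A,-]$ carries cofibre sequences to long exact sequences. The comparison map is a natural transformation between them, so, filtering a $T$-free $(G\times T)$-CW model of $B$ by its skeleta, the five lemma applied to the cofibre sequence of each pair of consecutive skeleta reduces the claim to a single free cell, with a Milnor $\varprojlim^1$ sequence passing back to $B$. A free cell is $B=(G\times T)_+\Smash_\Gamma S^W$ with $\Gamma\cap(1\times T)=1$; projection to $G$ is then injective on $\Gamma$, so $\Gamma=\Gamma_\phi=\{(k,\phi(k)):k\in K\}$ is the graph of a homomorphism $\phi\colon K\to T$ with $K\le G$.

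\emph{The free-cell case.} For such a $B$ one checks, using $T$-freeness, that $B/T\cong G_+\Smash_K S^{\bar W}$, with $\bar W$ the representation $W$ viewed over $K$ through $\Gamma_\phi\cong K$, so $\Sigma(B/T)\cong G_+\Smash_K(\Sigma S^{\bar W})$. Both sides now reduce to a common group: on the left, Lemma~\ref{lemma:change of group} applied to $K\le G$ (no twist, since $G$ is finite) gives $[A,\Sigma(B/T)]_G\cong[A,\Sigma S^{\bar W}]_K$; on the right, the change-of-groups (Wirthm\"uller) isomorphism for $\Gamma_\phi\le G\times T$, whose twisting representation is the tangent representation of the circle orbit $(G\times T)/\Gamma_\phi$, the trivial line $\mathrm{Ad}(T)\cong\R$, gives $[\epsilon^*A,B]_{G\times T}\cong[A,\Sigma S^{\bar W}]_K$ --- so the explicit suspension on the left is precisely the Wirthm\"uller twist on the right. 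It remains to check that this chain of identifications is the one induced by $\tau$ rather than some abstract isomorphism; by naturality this reduces to the assertion that $\tau$ restricts on a single orbit to the standard collapse map $S^{\mathrm{Ad}(T)}\to\Sigma^\infty_+T$ for the translation action of $T$ on itself, which is a direct verification.
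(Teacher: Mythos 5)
First, note that the paper does not prove this statement at all: it is quoted verbatim from Lewis--May--Steinberger [II.7.1] with a \qed, so there is no proof in the paper to compare yours with. Your outline does reproduce the architecture of the standard (LMS-style) argument: a dimension-shifting transfer for the $T$-free quotient, a cell induction over $T$-free cells, and on a free cell $(G\times T)_+\Smash_{\Gamma_\phi}S^W$ the identification of both sides with $[A,\Sigma S^{\bar W}]_K$, the explicit $\Sigma$ on the orbit side matching the $\mathrm{Ad}(T)\cong\mathbb{R}$ twist in the Wirthm\"uller isomorphism. That free-cell bookkeeping, including the direction of the twist, is correct.

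The gap is that the two steps you defer are the entire content of the theorem. For a general $T$-free $(G\times T)$-spectrum $B$ (not the suspension spectrum of a free $T$-space) there are no ``circle fibres'' to compactify: the quotient $B/T$ only makes sense after restricting $B$ to the $T$-fixed universe, and producing a natural stable $(G\times T)$-map $\tau\colon\epsilon^*\Sigma(B/T)\to B$ compatible with this change of universe is exactly what occupies LMS II \S 7; ``fibrewise Pontryagin--Thom'' is a heuristic, not a construction, at this level of generality. The same applies to your final ``direct verification'': to reduce the free-cell comparison to the single orbit $S^{\mathrm{Ad}(T)}\to\Sigma^\infty T_+$ you need $\tau$ to commute with induction along $\Gamma_\phi\le G\times T$ and with smashing with $S^W$, properties that are only available once $\tau$ has been constructed with care, and whose proofs are themselves nontrivial. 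Separately, the limit step of your induction is misstated: the skeletal filtration sits in the target variable of $[A,-]$, where there is no Milnor $\lim^1$ sequence; passage from skeleta to $B$ requires $A$ to be compact. The standard repair is to use that inflation $\epsilon^*$ has a right adjoint (genuine $T$-fixed points), so the claim becomes the assertion that an adjoint map of $G$-spectra $\Sigma B/T\to (i^*B)^T$ is an equivalence, which one then tests only against the compact generators $G/H_+\Smash S^n$; since the theorem is asserted for arbitrary $A$, your argument needs to be rearranged in this way. So what you have is an accurate roadmap of the known proof rather than a proof.
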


\subsection{Spanier-Whitehead duality.}
Using function spectra we may define the functional duality
functor $DX=F(X,S^0)$ on $G$-spectra $X$. When restricted to
finite $G$-spectra, the natural map $X \lra D^2X$ is an
equivalence, and one may give a more concrete description: if $X$
is a based $G$-space which embeds in the sphere $S^{1 \oplus V}$, we have
$$\Sigma^VDX\simeq S^{1\oplus V} \setminus X, $$
where we have supressed notation for the suspension spectrum.
The formal properties of the category of $G$-spectra give a useful statement
relating homology and cohomology.

\begin{lemma}[Spanier-Whitehead duality \mbox{\cite[III.2.9]{ll:esht}}]
If $X,Y$ are finite $G$-$CW$-spectra and $E$ is a $G$-spectrum, then
\begin{enumerate}[(i)]
\item there is an isomorphism $SW:E^*_G(X)\stackrel{\cong}{\lra}E_*^G(DX)$;
\item a $G$-map $f:X\lra Y$ gives rise to a commutative diagram
$$
\begin{xy}
\xymatrix{
E^*_G(Y)\ar^-{f^*}[r]\ar_{SW}^\cong[d]&E^*_G(X)\ar^{SW}_\cong[d]\\
E_*^G(DY)\ar^-{(Df)_*}[r]&E_*^G(DX).
}
\end{xy}
$$
\end{enumerate}
\qed
\end{lemma}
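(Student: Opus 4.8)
The plan is to reduce the statement to the \emph{dualizability} of finite $G$-CW-spectra — essentially the fact, already recalled above, that $X\to D^2X$ is an equivalence — and then to argue formally with smash products and function spectra. Recall $DX=F(X,S^0)$, and that there is a natural evaluation $\varepsilon\colon DX\Smash X\to S^0$ and a natural assembly map $\nu\colon E\Smash DX\to F(X,E)$. The input I would isolate first is that, for $X$ a finite $G$-CW-spectrum and $E$ any $G$-spectrum, $\nu$ is an equivalence; equivalently, that there is a coevaluation $\eta\colon S^0\to X\Smash DX$ satisfying the triangle identities with $\varepsilon$. The cleanest route is induction over the cells of $X$. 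For an orbit cell $X=(G/H)_+\Smash S^n$ one has $D((G/H)_+\Smash S^n)\simeq(G/H)_+\Smash S^{-n}$: since $G/H$ is a finite, $0$-dimensional $G$-set, the two change-of-groups isomorphisms of Lemma~\ref{lemma:change of group} identify induction along $H$ with coinduction along $H$ with no representation-sphere shift, and $DS^n\simeq S^{-n}$; the same identifications give $E\Smash\bigl((G/H)_+\Smash S^{-n}\bigr)\simeq F\bigl((G/H)_+\Smash S^n,E\bigr)$. For the inductive step, a cofibre sequence $A\to X'\to X''$ of finite $G$-CW-spectra produces a map of cofibre sequences from $E\Smash D(-)$ to $F(-,E)$; since $\nu$ is an equivalence on the two outer terms it is one on the middle, by the five lemma applied to the long exact homotopy sequences.

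Granting this, I would construct $SW$ explicitly: represent a class of $\widetilde E^n_G(X)$ by a $G$-map $\alpha\colon X\to\Sigma^nE$ and set
\[
SW(\alpha)=\bigl(\,S^0\xrightarrow{\ \eta\ }X\Smash DX\xrightarrow{\ \alpha\Smash 1\ }\Sigma^nE\Smash DX\,\bigr)\in\pi^G_{-n}(E\Smash DX)=\widetilde E^G_{-n}(DX).
\]
This is well defined on homotopy classes and additive. Composing $SW$ with the isomorphism $\nu_*$ of the first paragraph, and using $\nu_X\circ\eta=[\mathrm{id}_X]$ (which expresses that $\eta$ is the coevaluation), one recovers the tautological chain
\[
\widetilde E^n_G(X)=\pi^G_0 F(X,\Sigma^nE)=\pi^G_0\Sigma^n F(X,E)=\pi^G_{-n}F(X,E)=\widetilde E^G_{-n}(DX),
\]
so $SW$ is inverse to $\nu_*$ and in particular an isomorphism. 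This is~(i), in the degree conventions of \cite{ll:esht}.

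For naturality, given $f\colon X\to Y$ put $Df=F(f,\mathrm{id})\colon DY\to DX$. The square with vertices $S^0$, $X\Smash DX$, $Y\Smash DY$, $Y\Smash DX$ and edges $\eta_X$, $\eta_Y$, $f\Smash 1$ and $1\Smash Df$ commutes, i.e. $(f\Smash 1_{DX})\circ\eta_X=(1_Y\Smash Df)\circ\eta_Y$; this is part of the standard formalism of duals (equivalently, the characterisation of $Df$ through $\varepsilon$). Using it together with bifunctoriality of $\Smash$, for $\beta\colon Y\to\Sigma^nE$ one computes
\[
SW(\beta\circ f)=(\beta\Smash 1_{DX})\circ(f\Smash 1_{DX})\circ\eta_X=(1_{\Sigma^nE}\Smash Df)\circ(\beta\Smash 1_{DY})\circ\eta_Y=(Df)_*\,SW(\beta),
\]
since $(Df)_*$ is post-composition with $1\Smash Df$. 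This is exactly the commutative square of~(ii).

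The only genuine content is the dualizability input of the first paragraph, and within it the orbit base case: one must set up the change-of-groups identifications so that induction and coinduction along $H\le G$ agree with no spurious representation-sphere shift, and then keep the cofibre-sequence bookkeeping coherent through the cellular induction. Steps~(i) and~(ii) are then formal diagram-chases.
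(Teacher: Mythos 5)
Your proposal is essentially correct, but note that the paper does not prove this lemma at all: it is quoted verbatim from Lewis--May--Steinberger \cite[III.2.9]{ll:esht} and marked \qed, so there is no internal argument to compare against. What you have written is, in effect, a self-contained reconstruction of the standard proof from that reference: establish dualizability of finite $G$-CW-spectra (equivalently, that $\nu\colon E\Smash DX\to F(X,E)$ is an equivalence) by cellular induction, with the orbit base case handled by the self-duality of $(G/H)_+$, and then deduce (i) and (ii) by formal manipulation of $\eta$, $\varepsilon$ and the triangle identities -- the naturality computation in (ii) is exactly right. The one point to flag is your base case: the identification $D\bigl((G/H)_+\bigr)\simeq (G/H)_+$ ``with no representation-sphere shift'' is a finite-index phenomenon (it is the Wirthm\"uller isomorphism with trivial tangent representation, which is what makes the two change-of-groups maps of Lemma \ref{lemma:change of group} compatible in the way you use); for a general compact Lie group, as in the LMS statement, one has $D(G/H_+)\simeq G_+\Smash_H S^{-L(H)}$ and the induction must carry that shift along. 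Since the ambient group in this paper is finite, your argument is valid as stated in the context where the lemma is actually applied, but it proves a slightly narrower statement than the cited one; either restrict the hypothesis to finite (or finite-index isotropy) groups or insert the tangent-representation correction in the orbit case.
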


\subsection{Equivariant $K$-theory}\label{subsection:K theory}

We are concerned with the equivariant $K$-theory of Atiyah and
Segal \cite{gs:ekt} of finite $G$-CW-complexes, so that
$K^0_G(X)$ is the Grothendieck group of equivariant vector
bundles over $X$, and $K^*_G$ is $R(G)$ in even degrees and zero
in odd degrees. We use the represented extension to arbitrary
spectra: there is a $G$-spectrum $K$ so that
  for  a based $G$-space $X$ we have
$$
\widetilde{K}^0_G(X)=[X,K]_G\mathrm{\ and\ }\widetilde{K}_0^G(X)=[S^0,K\Smash X]_G.
$$

Equivariant $K$-theory has its version of the {\em Thom isomorphism}: if
$E$ is a bundle
over $X$ then we have an isomorphism
$\tau:\widetilde{K}^*_G(X)\stackrel{\cong}{\lra}\widetilde{K}^*_G(X^E)$, 
where $X^E$ denotes the Thom space of $E$.
The isomorphism is made explicit in \cite[\S3]{gs:ekt}, and this permits a definition of the {\em Euler class} $\chi(V)=i^*_V\tau(1)\in K^*_G$, where $i_V$ is the inclusion $S^0\lhra S^V$. In turn, this paves the way for the equivariant {\em Bott periodicity}.

\begin{theorem}[Equivariant Bott periodicity \cite{gs:ekt}]
For a based $G$-space $X$, and a complex representation $V$ of $G$,
multiplying by the Bott class $\tau(1)\in\K^0_G(S^V)$ gives a natural isomorphism
$$
\K^0_G(X)\stackrel{\cong}{\lra}\K^0_G(S^V\Smash X).
$$
Moreover, if $\dim_\C(V)=n$ then
$$
\chi(V)=1-\lambda V+\lambda^2V-\cdots+(-1)^n\lambda^nV\in R(G),
$$
where $\lambda^rV$ denotes the $r^{th}$ exterior power of $V$.
\qed
\end{theorem}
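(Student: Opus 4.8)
The first assertion is a repackaging of the Thom isomorphism, and I would deduce it as follows. Over a based $G$-space $X$ the trivial bundle $\underline V=V\times X$ has reduced Thom space $X^{\underline V}\simeq S^V\Smash X$, and its Thom class is the external product of the Bott class $\tau(1)\in\K^0_G(S^V)$ with $1\in K^0_G(X)$; this is immediate from the naturality of the Thom class construction of \cite[\S3]{gs:ekt} applied to the pullback square exhibiting $\underline V$ as the pullback of $V\to\mathrm{pt}$ along $X\to\mathrm{pt}$. Since cup product with this external class is exactly the multiplication map $\K^0_G(X)\to\K^0_G(S^V\Smash X)$ of the statement, the Thom isomorphism $\tau\colon\K^0_G(X)\stackrel{\cong}{\lra}\K^0_G(X^{\underline V})$ yields the claim. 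The one point needing care is to check that the abstract Thom isomorphism --- multiplication by the Thom class --- coincides on the nose with external multiplication by $\tau(1)$; this is a short diagram chase using the projection formula.

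For the Euler class formula I would pass to the explicit model of the Bott class. Recall from \cite[\S3]{gs:ekt} that over the disc $D(V)$ the class $\tau(1)$ is represented by the Koszul complex $\Lambda^\bullet V=(0\to\Lambda^0V\to\Lambda^1V\to\cdots\to\Lambda^nV\to 0)$ of $G$-vector bundles over $D(V)$, with differential at a point $x\in D(V)$ given by exterior multiplication by $x$; this complex is exact over $S(V)$, so it determines a class in $K^0_G(D(V),S(V))=\K^0_G(S^V)$, namely $\tau(1)$. Now $i_V\colon S^0\lhra S^V$ carries the non-basepoint to the origin $0\in V\subset D(V)$, so $i_V^*\tau(1)$ is the restriction of this complex to the fibre over $0$. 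Since the Koszul differential at $0$ vanishes, the restricted complex has zero differential, and hence its class in $R(G)=K^0_G(\mathrm{pt})$ is the alternating sum $\sum_{r=0}^n(-1)^r[\Lambda^rV]=1-\lambda V+\lambda^2V-\cdots+(-1)^n\lambda^nV$, which is $\chi(V)$ by definition.

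The hard part --- really the only delicate point --- is setting up the Koszul model correctly and verifying that it genuinely represents the Bott class $\tau(1)$ in the precise sense used to define $\chi(V)$; this forces one to work with the concrete construction of the equivariant Thom class in \cite[\S3]{gs:ekt} rather than with any abstract characterisation, and to keep track of the reduced-versus-unreduced Thom space conventions. Once that is in place, the exactness of $\Lambda^\bullet V$ over $S(V)$ and the vanishing of its differential at the origin are both immediate from the definition of the Koszul differential, and the remaining steps are just the standard additivity of $K$-theory classes of bounded complexes of vector bundles.
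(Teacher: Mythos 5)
Your proposal is correct, and it is essentially the argument the paper has in mind: the paper states this theorem without proof, citing Segal, having just noted that the Thom isomorphism is made explicit in \cite[\S3]{gs:ekt} and that this ``paves the way'' for Bott periodicity, which is exactly your route (periodicity as the Thom isomorphism for the trivial bundle $V\times X$, and the Euler class formula by restricting the Koszul-complex Thom class to the origin, where the differential vanishes). The only caution worth recording is that in a from-scratch treatment this is not an independent proof --- Segal's Thom isomorphism is itself established using periodicity --- but within the paper's logical set-up, which takes the Thom isomorphism and its Koszul model as given, your deduction is exactly what is intended.
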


\subsubsection{Restriction in equivariant $K$-theory.} For $H\leq G$,
let $\pi:G/H\lra G/G$ denote projection. It is not hard to verify from the explicit form of the change of groups isomorphisms
that the restriction maps in homology and cohomology are represented in the
following sense.

\begin{lemma}
\label{resrep}
There are commutative diagrams
$$
\begin{xy}
\xymatrix{
\K^0_G(X)\ar@{=}[d]\ar^-{\res{G}{H}}[rr]&&\K^0_H(X)\ar^\theta_\cong[d]\\
\K^0_G(G/G_+\Smash X)\ar^-{(\pi\Smash1)^*}[rr]&&\K^0_G(G/H_+\Smash X)
}
\end{xy}
$$
and
$$
\begin{xy}
\xymatrix{
\K_0^G(X)\ar@{=}[d]\ar^-{\res{G}{H}}[rr]&&\K_0^H(X)\ar^\phi_\cong[d]\\
\K_0^G(G/G_+\Smash
X)\ar^-{(D(\pi)\Smash1)_*}[rr]&&\K_0^G(G/H_+\Smash X). }
\end{xy}
$$
\qed
\end{lemma}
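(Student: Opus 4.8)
The statement to prove is Lemma \ref{resrep}, which asserts that restriction maps in equivariant $K$-theory are represented by the evident maps on $G$-spectra, via the change-of-groups isomorphisms $\theta$ and $\phi$ of Lemma \ref{lemma:change of group}. My plan is to unwind definitions on both sides and check that the two composites agree by tracing a class through the change-of-groups adjunction explicitly.

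First I would recall the two descriptions being compared. On the one hand, $\res{G}{H}\colon \K^0_G(X)\to \K^0_H(X)$ is, at the bundle level, the map sending an equivariant bundle to its underlying $H$-bundle; on the represented side it is induced by restricting a $G$-map $X\to K$ to an $H$-map. On the other hand, the map $(\pi\Smash 1)^*$ is obtained by smashing the projection $\pi\colon G/H_+\to G/G_+=S^0$ with $X$ and applying $\K^0_G(-)$; note $G/G_+\Smash X\simeq X$, so the top-left corner is just $\K^0_G(X)$ as claimed. The key point is that, under the isomorphism $\theta\colon [A,B]_H\xrightarrow{\cong}[G_+\Smash_H A, B]_G$ of Lemma \ref{lemma:change of group}, applied with $A=X$ (viewed as an $H$-spectrum) and $B=K$, the composite $\theta\circ\res{G}{H}$ agrees with precomposition by the $G$-map $G_+\Smash_H X\to X$ that covers $\pi$. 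So the first step is to identify $G/H_+\Smash X$ with $G_+\Smash_H X$ (as $G$-spectra, using that $X$ carries a $G$-action so the two smash constructions coincide) and to identify the map $\pi\Smash 1$ with the canonical projection $G_+\Smash_H X\to X$.

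The heart of the argument is then the compatibility of $\theta$ with restriction, which is the standard unit/counit bookkeeping for the induction–restriction adjunction: the unit of the adjunction is precisely the $G$-map $\eta\colon G_+\Smash_H X\to X$ (for $X$ already a $G$-spectrum, $G_+\Smash_H \res{G}{H}X\to X$), and $\theta(f)=f\circ\eta$ after identifying $\hom_H(X,K)$ with $\hom_G(G_+\Smash_H X, K)$; meanwhile $\res{G}{H}(g)$ for $g\in[X,K]_G$ is just $\res{G}{H}g$, so $\theta(\res{G}{H}g)=g\circ\eta=(\pi\Smash 1)^*(g)$. This is where one must be careful to use \emph{the} explicit form of $\theta$ from \cite[II.4.3]{ll:esht} rather than an abstract adjunction isomorphism, since the lemma is precisely the assertion that these explicit maps match up; I would spell out the counit/unit maps and check the triangle identity is what makes the square commute. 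The homology square is entirely dual: one replaces $\theta$ by $\phi$, the projection $\pi$ by its Spanier--Whitehead dual $D(\pi)$, and smashing on the left by smashing $K\Smash(-)$ on the appropriate side, and runs the same adjunction argument for $[B, G_+\Smash_H A]_G\cong[B,A]_H$.

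The main obstacle is not conceptual but notational: making sure the various identifications — $G/G_+\Smash X\simeq X$, $G/H_+\Smash X\simeq G_+\Smash_H X$, and the compatibility of these with the $G$-action on $X$ versus the "extended" action — are all performed consistently, so that the canonical projection $G/H_+\to S^0$ really does correspond to the unit of the adjunction under all the identifications. Once the diagram is set up with the right arrows, commutativity is exactly a triangle identity for the adjunction, which is why the lemma can be asserted as "not hard to verify." I would therefore present the proof as: (1) set up the identifications, (2) recall the explicit $\theta$ and $\phi$, (3) observe the relevant maps are the adjunction unit/counit, (4) invoke the triangle identities.
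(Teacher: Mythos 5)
Your treatment of the cohomology square is exactly the verification the paper has in mind (the paper prints no proof beyond the remark that the lemma follows from the explicit form of the change-of-groups isomorphisms): untwist $G/H_+\Smash X\cong G_+\Smash_H X$ via $(g,x)\mapsto(gH,gx)$, observe that under this identification the action map $G_+\Smash_H\res{G}{H}X\lra X$, $(g,x)\mapsto gx$, becomes $\pi\Smash 1$, and then $\theta(\res{G}{H}g)=g\circ(\pi\Smash 1)$ by the explicit formula for $\theta$ together with naturality. Two small slips there: the map $G_+\Smash_H X\lra X$ is the \emph{counit} of the induction--restriction adjunction, not the unit, and the commutativity is naturality of that counit (equivalently the explicit formula $\theta(f)=\epsilon\circ(G_+\Smash_H f)$) rather than a triangle identity; also your formula ``$\theta(f)=f\circ\eta$'' only typechecks when $f$ is the restriction of a $G$-map, which is the case you need.

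The place where ``entirely dual'' undersells the work is the homology square. The isomorphism $\phi:[B,A]_H\cong[B,G_+\Smash_H A]_G$ is \emph{not} the formal adjoint of $\theta$: induction is left adjoint to restriction, so $[B,G_+\Smash_H A]_G$ is not formally identified with $[B,A]_H$, and there is no unit/counit argument of the same shape to run. The isomorphism rests on the Wirthm\"uller equivalence $G_+\Smash_H A\simeq F_H(G_+,A)$ (with no dimension shift because $G$ is finite), equivalently on the self-duality of $G/H_+$; concretely $\phi(f)=(G_+\Smash_H f)\circ\tau$, where $\tau:S^0\lra G/H_+$ is the stable transfer. To obtain the stated square you therefore need two further facts: (i) under the equivalence $D(G/H_+)\simeq G/H_+$ the map $D(\pi)$ \emph{is} the transfer $\tau$, and (ii) the transfer is natural and compatible with smashing through the untwisting isomorphism, so that $\phi(\res{G}{H}h)=\tau_{K\Smash X}\circ h=(D(\pi)\Smash 1)\circ h$ for a $G$-map $h:S^0\lra K\Smash X$. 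You do name $D(\pi)$, so you have the right map in hand, but these two points are the actual content of the homology half and should be spelled out; with them included your argument is the intended one.
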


%One checks \cite{gw:pdiektfc} that these definitions agree with the usual restriction of representations where applicable.
The restriction maps are not, in general, injective. However, one finds that
\begin{equation}\label{equation:injective restriction}
\res{G}{*}:K^0_G(\C P(V))\stackrel{\{\res{G}{H}\}}{\lra}\prod\limits_{\genfrac{}{}{0pt}{}{H\leq G}{H\mathrm{\ cyclic}}}K^0_H(\C P(V))
\end{equation}
and the analogous map in homology {\em are} both injective. This is easily deduced
from the corresponding statement about representation rings. For example, 
it follows from the calculations in Subsection \ref{subsec:KCPV} that
$K^0_G(\C P(V))$ and $K_G^0(\C P(V))$ are both free modules over $R(G)$ on 
generators which map to each other under restriction. This is explained in 
more detail in \cite{gw:pdiektfc}.

\section{Equivariant Poincar\'{e} duality}

\subsection{Orientation of topological $G$-manifolds}

We work with smooth $G$-manifolds $M$, for which the {\em Slice Theorem}
\cite[II Theorem 5.4]{gb:itctg} asserts that given $x\in M$ with isotropy $G_x\leq G$,
there is a neighbourhood $U$ of the orbit $Gx$, which is $G$-homeomorphic to
$G\times_{G_x}V_x$, where $V_x$ is the tangent space to $M$ at $x$.

\begin{lemma}\label{lemma:G-excision leading to fundamental class}
Using the notation of the Slice Theorem, for each $i$ there are isomorphisms
\begin{enumerate}[(i)]
\item$E^G_i(M,M\setminus Gx)\cong E^G_i(U,U\setminus Gx)$;
\item$E^G_i(U,U\setminus Gx)\cong E^G_i(G\times_{G_x}{V_x},(G\times_{G_x}{V_x})\setminus Gx)$;
\item$E^G_i(G\times_{G_x}{V_x},(G\times_{G_x}{V_x})\setminus Gx)\cong\widetilde{E}^{G}_i(G_+\Smash_{G_x}S^{V_x})$.
\end{enumerate}
\end{lemma}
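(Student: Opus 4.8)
The plan is to establish the three isomorphisms in turn, each being a standard reduction: (i) is equivariant excision, (ii) is an immediate consequence of the Slice Theorem, and (iii) trades the relative group of the local model for the reduced group of its Thom space. For (i) I would argue as follows. Since $G$ is finite, $Gx$ is a finite, hence closed, set of points, so $A:=M\setminus Gx$ is open and equals its own interior in $M$. As $U$ is open with $Gx\subseteq U$, the set $Z:=M\setminus U$ is closed and $Z\subseteq A=\operatorname{int}_M A$, so $Z$ can be excised and
$$
E^G_i(M,M\setminus Gx)\cong E^G_i(M\setminus Z,A\setminus Z)=E^G_i(U,U\setminus Gx).
$$
Concretely, this amounts to the based $G$-homeomorphism $U/(U\setminus Gx)\stackrel{\cong}{\lra}M/(M\setminus Gx)$ induced by the inclusion, which is bijective and (since $M\setminus Gx=(U\setminus Gx)\cup Z$ is open in $M$) open; applying $\widetilde E^G_*$ then yields (i).

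For (ii), the Slice Theorem provides a $G$-homeomorphism $U\cong G\times_{G_x}V_x$ carrying the orbit $Gx$ onto the zero section $G\times_{G_x}\{0\}$, and hence $U\setminus Gx$ onto the complement of the zero section; this homeomorphism of pairs induces the required isomorphism directly. For (iii) I would fix a $G_x$-invariant Hermitian metric on $V_x$ (available since $V_x$ is unitary) and consider the disc bundle $G\times_{G_x}D(V_x)$ and sphere bundle $G\times_{G_x}S(V_x)$ inside $G\times_{G_x}V_x$. Radial contraction is $G_x$-equivariant, so $D(V_x)\hookrightarrow V_x$ and $S(V_x)\hookrightarrow V_x\setminus\{0\}$ are $G_x$-homotopy equivalences; applying $G\times_{G_x}(-)$ and comparing the long exact sequences of the two pairs by the five lemma gives
$$
E^G_i\bigl(G\times_{G_x}V_x,(G\times_{G_x}V_x)\setminus Gx\bigr)\cong E^G_i\bigl(G\times_{G_x}D(V_x),G\times_{G_x}S(V_x)\bigr).
$$
The latter pair is a $G$-cofibration, being obtained from the cofibration $S(V_x)\hookrightarrow D(V_x)$ by the functor $G\times_{G_x}(-)$, so its relative $E$-homology is $\widetilde E^G_i$ of the quotient $(G\times_{G_x}D(V_x))/(G\times_{G_x}S(V_x))$ — which is the Thom space of the bundle $G\times_{G_x}V_x\to G/G_x$, i.e.\ $G_+\Smash_{G_x}(D(V_x)/S(V_x))=G_+\Smash_{G_x}S^{V_x}$. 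Composing the three steps proves the lemma.

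None of the steps is deep, but the point needing genuine care is (iii): one must not simply collapse the complement of the zero section in $G\times_{G_x}V_x$ — that inclusion is not a cofibration — but instead retract onto the disc/sphere pair before passing to the quotient. Getting (iii) into exactly this form is what makes it useful downstream: together with change of groups (Lemma \ref{lemma:change of group}) it will identify the local contribution $\widetilde E^G_i(G_+\Smash_{G_x}S^{V_x})$ with $\widetilde E^{G_x}_i(S^{V_x})$, which is the home of the fundamental class.
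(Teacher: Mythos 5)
Your proof is correct, and parts (i) and (ii) run essentially as in the paper (excision, respectively the slice homeomorphism of pairs), but your argument for (iii) takes a genuinely different route. The paper reduces (iii) at the outset to the isotropy group: via the change-of-groups isomorphisms it is equivalent to showing $E^{G_x}_i(V_x,V_x\setminus\{0\})\cong\widetilde{E}^{G_x}_i(S^{V_x})$, which follows because $S^{V_x}\setminus\{0\}\cong_{G_x}V_x$ is equivariantly contractible. You instead stay at the level of $G$: an invariant metric and radial retraction replace the pair $(G\times_{G_x}V_x,(G\times_{G_x}V_x)\setminus Gx)$ by the disc/sphere bundle pair, which is a $G$-cofibration, and its quotient is the Thom space $G_+\Smash_{G_x}S^{V_x}$. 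Your version buys a direct proof of the stated $G$-level isomorphism without invoking Lemma \ref{lemma:change of group} inside the lemma (it is then only needed afterwards, as in diagram (\ref{equation:equivariant fundamental class}), to pass to $\widetilde{E}^{G_x}_i(S^{V_x})$), and it makes explicit the cofibration point you rightly flag; the paper's version is shorter, pushing all the point-set work down to the single representation $V_x$, at the price of using the change-of-groups isomorphism silently within the proof of (iii). One small caution: your parenthetical gloss in (i), identifying $E^G_*(M,M\setminus Gx)$ with $\widetilde{E}^G_*(M/(M\setminus Gx))$, is not justified as stated, since $(M,M\setminus Gx)$ is not a cofibration pair --- exactly the issue you raise in (iii) --- but it is harmless because the excision argument you give first already suffices.
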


\begin{proof}
For (i) and (ii), use excision. Part (iii) is equivalent to showing that
$$
E_i^{G_x}({V_x},{V_x}\setminus\{0\})\cong\widetilde{E}^{G_x}_i(S^{V_x}),
$$
and this follows since $S^{V_x}\setminus\{0\}\cong_G V_x$, which
is contractible.
\end{proof}

Composing the three isomorphisms of Lemma \ref{lemma:G-excision
leading to fundamental class}, the outcome is that
\begin{equation}\label{equation:G excision outcome}
E_*^G(M,M\setminus Gx)\cong \widetilde{E}_*^{G_x}(S^{V_x}).
\end{equation}

Provided we restrict to cohomology theories $E_G^*$ and manifolds
$M$ so that the modules $\widetilde{E}_*^{G_x}(S^{V_x})$ that occur 
in this way are free on one generator, we may copy the classical 
definitions.

\begin{definition}[Fundamental classes]\label{definition:G-fundamental class}
\begin{enumerate}[(i)]
\item A cohomology theory $E_G^*(\cdot)$ is said to be {\em complex stable}
if, for each complex representation $V$, there are classes
$\sigma_{V} \in \widetilde{E}_G^{|V|}(S^V)$ giving isomorphisms
$$\widetilde{E}_G^*(S^{|V|}\Smash X) \stackrel{\cong}\lra \widetilde{E}_G^*(S^{V} \Smash X)$$
for any $G$-spectrum $X$. Note in particular that this means
$\widetilde{E}_G^*(S^V)$ is a free $E_G^*$-module on one generator.  
\item Let $M$ be a smooth $G$-manifold of dimension $n$, and let $E_G^*(\cdot) $ be a 
complex stable cohomology theory.  Consider the
composite $\phi_{Gx}$ below. The maps labelled (i), (ii), (iii) are the corresponding isomorphisms of Lemma \ref{lemma:G-excision leading to fundamental class}, $\phi$ is the change of group isomorphism (Lemma \ref{lemma:change of group}) and
$$
i^{Gx}_*:E_*^G(M)\cong E_*^G(M,\emptyset)\lra E_*^G(M,M\setminus Gx)
$$
is the map induced by $G$-inclusion of the $G$-pairs $(M,\emptyset)\stackrel{i^{Gx}}{\lhra}(M,M\setminus Gx)$.
\begin{equation}\label{equation:equivariant fundamental class}
\begin{minipage}{\textwidth}
\begin{xy}
\xymatrix{
E^G_*(M)\ar[d]_-{i^{Gx}_*}\ar[rr]^-{\phi_{Gx}}&&\widetilde{E}^{G_x}_*(S^{V_x})\\
E^G_*(M,M\setminus Gx)\ar[d]^-\cong_-{(i)}&&\widetilde{E}^{G}_*(G_+\Smash_{G_x}S^{V_x})\ar^\cong_{\phi^{-1}}[u]\\
E^G_*(U,U\setminus Gx)\ar[rr]^-\cong_-{(ii)}&&E^G_*(G\times_{G_x}{V_x},(G\times_{G_x}{V_x})\setminus Gx)\ar[u]^-\cong_-{(iii)}
}
\end{xy}
\end{minipage}
\end{equation}
An element $\xi\in E^G_n(M)$ is a {\em fundamental class} for $M$ if the image $\phi_{Gx}(\xi)$ is an $\widetilde{E}^{G_x}_*$-module generator for $\widetilde{E}^{G_x}_*(S^{V_x})$ for all $x\in M$, in which case one writes $[M]$ for such a $\xi$.
\end{enumerate}
\end{definition}

\subsection{Poincar\'{e} duality}

Before we can state the Poincar\'{e} duality theorem we must first recall \cite[III \S3]{ll:esht} how cap products work in the represented setting.

\begin{definition}[Cap products]
Let $E$ be a commutative ring $G$-spectrum with multiplicative structure $\mu$,
and let $X$ be a $G$-$CW$-complex. The {\em cap product}
$E^*_G(X)\otimes E^G_*(X)\lra E_*^G(X)$ is defined by setting $c\cap h$ to be the composite
$$
S\stackrel{h}{\lra}E\Smash X\stackrel{1\Smash\Delta}{\lra}E\Smash X\Smash X\stackrel{1\Smash c\Smash1}{\lra}E\Smash E\Smash X\stackrel{\mu\Smash1}{\lra}E\Smash X.
$$
\end{definition}

\begin{theorem}[Poincar\'{e} duality]
Let $E_G^*(\cdot) $ be a complex stable cohomology theory. If $M$ is a smooth
$G$-manifold with $E_G^*$-fundamental class $[M]$ then there is an isomorphism
$$
E^*_G(M)\stackrel{\cong}{\lra}E_*^G(M)
$$
given by capping with the fundamental class, precisely $a\lmt a\cap[M]$ for $a\in E^*_G(M)$.
\end{theorem}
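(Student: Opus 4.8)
The plan is to reduce the global statement to the local model computations already packaged in Lemma~\ref{lemma:G-excision leading to fundamental class} and Definition~\ref{definition:G-fundamental class}, and to handle the reduction by the standard Mayer--Vietoris patching argument, adapted to the equivariant setting. First I would set up the comparison map: capping with $[M]$ is a map of $E_G^*$-modules $E^*_G(M) \to E^G_*(M)$, and more generally, for an open $G$-subset $U \subseteq M$, capping with the image of $[M]$ under the restriction $E^G_*(M) \to E^G_*(M, M \setminus U)$ gives a ``local'' duality map
$$
E^*_G(U) \lra E^G_*(M, M \setminus U),
$$
using the module structure of $E^G_*(M,M\setminus U)$ over $E^*_G(U)$ (restrict cohomology classes from $M$, or work directly with $U$). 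The key formal point is that these local duality maps are compatible with the Mayer--Vietoris sequences for a decomposition $M = U_1 \cup U_2$ into $G$-invariant opens: on cohomology one has the usual $E^*_G(U_1 \cup U_2) \to E^*_G(U_1) \oplus E^*_G(U_2) \to E^*_G(U_1 \cap U_2)$, and on the homological side the excision sequence for the pieces $(M, M\setminus U_i)$, with the cap-with-$[M]$ maps intertwining them up to sign. This is exactly the equivariant analogue of the classical argument, and the compatibility is a diagram chase using naturality of cap products and the projection formula.

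Next I would verify the \emph{base case}: for a single slice neighbourhood $U \cong G \times_{G_x} V_x$, the local duality map $E^*_G(U) \to E^G_*(M, M \setminus Gx)$ is an isomorphism. Here $E^*_G(G\times_{G_x}V_x) \cong E^*_{G_x}(\mathrm{pt})$ by homotopy invariance and change of groups, and $E^G_*(M,M\setminus Gx) \cong \widetilde E^{G_x}_*(S^{V_x})$ by \eqref{equation:G excision outcome}. Under these identifications the duality map becomes, up to a unit, multiplication by the Bott/Thom class $\sigma_{V_x}$ — this is precisely where the hypothesis that $E^*_G$ is complex stable enters, since it guarantees $\widetilde E^{G_x}_*(S^{V_x})$ is free of rank one on a generator dual to $\sigma_{V_x}$, and the fundamental-class condition in Definition~\ref{definition:G-fundamental class}(ii) says $\phi_{Gx}([M])$ hits that generator. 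So the base case reduces to the statement that capping the Thom class with its dual generator is an isomorphism, which is the Thom isomorphism (equivalently Bott periodicity) already quoted. One also needs the duality map for $U \cap (M\setminus Gx)$-type pieces and for disjoint unions of orbits, but disjoint unions are immediate from additivity.

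Then I would run the \emph{induction}. Since $M$ is a smooth $G$-manifold, it can be covered by finitely many slice neighbourhoods (here I would want $M$ compact, or more carefully exhaust $M$ by such finite unions and pass to a colimit; for the application to $\C P(V)$ compactness is automatic). Writing $M = U_1 \cup \dots \cup U_k$ with each $U_j$ a slice neighbourhood and each finite intersection $U_{j_1} \cap \dots \cap U_{j_r}$ again a disjoint union of sets of the form $G\times_{H} W$ for subgroups $H$ and $H$-representations $W$ (so that the base case applies to each), one proves by induction on $k$ that the local duality map for $U_1 \cup \dots \cup U_k$ is an isomorphism: the inductive step feeds the isomorphisms for $U_1\cup\dots\cup U_{k-1}$, for $U_k$, and for their intersection into the five-lemma applied to the pair of Mayer--Vietoris sequences. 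Taking $k$ maximal recovers $M$ itself, and the local duality map for $M$ is exactly cap with $[M]$, giving the theorem.

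The main obstacle I anticipate is purely bookkeeping rather than conceptual: making the ``local duality map over an open $U$'' genuinely natural and $E^*_G(U)$-linear, so that it commutes with the Mayer--Vietoris boundary maps on the nose (with correct signs). Concretely one must check that the restriction of $[M]$ to $E^G_*(M, M\setminus U)$ behaves well under further restriction to sub-opens, that the module action used on the homology side agrees with the one induced from $M$, and that the switch map $1 \Smash c \Smash 1$ in the definition of $\cap$ does not introduce incompatibilities across the decomposition. In the equivariant stable category all the relevant cofibre sequences, duality, and multiplicativity are available (from the Preliminaries), so these are checkable; but this is the step that requires care. A secondary point worth isolating is ensuring the intersections of slice neighbourhoods really are built from spaces of the form $G\times_H W$ — one may need to refine the cover (or use that such manifolds admit a finite ``good'' cover in the equivariant sense) so that the base case genuinely applies at every stage of the induction.
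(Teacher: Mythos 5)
Your proposal is correct and follows essentially the same route as the paper: the paper's proof simply cites the classical Mayer--Vietoris/excision induction (as in Greenberg--Harper) and notes that the only change is to start from a $G$-orbit $Gx$ rather than a point, which is exactly the slice-neighbourhood base case you work out, with the fundamental-class condition of Definition \ref{definition:G-fundamental class} supplying the local isomorphism. Your additional bookkeeping about naturality of the local duality maps and refining the cover is just the detail the paper leaves implicit.
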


\begin{proof}
The classical proof (see, for example, \cite[\S26]{mg:atafc}) proceeds by showing that $(-)\cap[M]$ induces an isomorphism on larger and larger subsets of $M$, starting from a point, and using Mayer-Vietoris sequences and excision. The only difference in our case is that we must start with a $G$-point, in other words the orbit $Gx$ for $x\in M$. By definition, the 
fundamental class provides exactly this input.
\end{proof}

\section{Construction of the fundamental class}

\subsection{Equivariant $K$-theory of $\C P(V)$}
\label{subsec:KCPV}

Our computation of $K^*_G(\C P(V))$ arises from the based cofibre sequence
\begin{equation}\label{equation:cofibre sequence}
S(V\otimes z)_+\lra D(V\otimes z)_+\lra D(V\otimes z)/S(V\otimes z)\cong S^{V\otimes z}
\end{equation}
and the following fundamental result of Atiyah and Segal \cite{gs:ekt}.

\begin{theorem}\label{theorem:K of a free action}
Let $G$ be a compact Lie group. Suppose $N$ is a normal subgroup which acts freely on the $G$-$CW$-complex $X$. Then the quotient $X\lra X/N$ induces an isomorphism $K^*_{G/N}(X/N)\stackrel{\cong}{\lra}K_G^*(X)$.
\qed
\end{theorem}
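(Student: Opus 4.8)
The plan is to prove the statement first in degree zero, by exhibiting a mutually inverse pair of constructions at the level of equivariant vector bundles, and then to deduce the graded version by a suspension argument. Write $q\colon X\lra X/N$ for the quotient map. Since the residual $G$-action on $X/N$ factors through $G/N$, pullback along $q$ carries a $(G/N)$-vector bundle $E\lra X/N$ to a $G$-vector bundle $q^*E\lra X$, and as pullback commutes with direct sums this induces a homomorphism $q^*\colon K^0_{G/N}(X/N)\lra K^0_G(X)$. The candidate inverse sends a $G$-vector bundle $F\lra X$ to $F/N\lra X/N$, equipped with its residual $(G/N)$-action. The substance of the proof is to check that $F/N$ is again a locally trivial vector bundle and that $F\mapsto F/N$ and $E\mapsto q^*E$ are mutually inverse up to natural isomorphism.

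For local triviality one uses that the free action of the compact Lie group $N$ makes $q\colon X\lra X/N$ a principal $N$-bundle, so that $X/N$ has an open cover by sets $U$ with $q^{-1}U\cong U\times N$ equivariantly, $N$ acting by translation on the second factor. Over such a $U$, any $N$-vector bundle on $U\times N$ is isomorphic to the pullback along $U\times N\lra U$ of its restriction to $U\times\{e\}$; hence $F|_{q^{-1}U}$ is pulled back from a bundle $F_U\lra U$, and $F/N|_U\cong F_U$ is a vector bundle. Gluing over the cover, and tracking the $G$-action throughout, shows that $F/N\lra X/N$ is a $(G/N)$-vector bundle. The same local picture furnishes canonical isomorphisms $q^*(F/N)\cong F$ of $G$-bundles and $(q^*E)/N\cong E$ of $(G/N)$-bundles, natural in $F$ and in $E$ respectively; passing to Grothendieck groups, these exhibit $q^*$ as an isomorphism on $K^0$.

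To pass to all degrees, apply the degree-zero result to the $G$-space $X\times\R^m$ (with $G$ acting trivially on the Euclidean factor), which again carries a free $N$-action, with quotient $(X/N)\times\R^m$. Together with the description of the negative $K$-groups via $K$-theory with compact supports, and equivariant Bott periodicity for the positive ones, and using naturality of $q^*$ in $X$, this yields the graded isomorphism $K^*_{G/N}(X/N)\stackrel{\cong}{\lra}K^*_G(X)$. The main obstacle is the local-triviality step of the second paragraph: identifying $q$ as a principal $N$-bundle and verifying that the $N$-quotient of a $G$-vector bundle over a free $N$-space is again a vector bundle with residual $(G/N)$-action. Once that is in hand the remaining verifications — that the two constructions are inverse, and the reduction to degree zero — are formal.
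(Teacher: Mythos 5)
The paper does not prove this statement at all: it is quoted verbatim from Segal \cite{gs:ekt} (it is essentially Proposition 2.1 there, in the form allowing a normal subgroup to act freely), which is why it carries a \qed{} with no argument. Your proposal is, in substance, Segal's original proof: the pullback functor $E\mapsto q^*E$ and the quotient functor $F\mapsto F/N$ give inverse equivalences between $(G/N)$-vector bundles on $X/N$ and $G$-vector bundles on $X$, with the only real work being the local triviality of $F/N$, which you correctly isolate. So the approach is the standard one and is sound. Three points deserve to be made explicit. First, the assertion that $q\colon X\lra X/N$ is a principal $N$-bundle is exactly where the hypothesis that $N$ is a \emph{compact Lie} group acting freely is used (local cross-sections come from Gleason's theorem or the slice theorem; for a $G$-CW complex one can also read it off from the cell structure), and your trivialisation of $F$ over $q^{-1}U\cong U\times N$ via the action map is the right mechanism; note that the sets $U$ need not be chosen $G$-invariantly, since the definition of a $(G/N)$-vector bundle only requires nonequivariant local triviality together with a continuous fibrewise-linear action, so ``tracking the $G$-action'' amounts to no more than observing that the residual action on $F/N$ is of this kind. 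Second, the Grothendieck-group description of $K^0_G$ that your argument uses agrees with the represented theory only for compact (finite) $G$-CW complexes; that suffices for the paper, whose application is to the compact free $T$-space $S(V\otimes z)$, but for the theorem as stated for arbitrary $G$-CW complexes one should either restrict to finite complexes or extend the bundle-category equivalence over finite subcomplexes by naturality. Third, the passage to nonzero degrees is indeed formal once one notes that the equivalence of bundle categories is natural and hence induces compatible isomorphisms for pairs (equivalently suspensions), with Bott periodicity handling positive degrees; spelling this out via relative groups $K^0$ of $(X\times D^m, X\times S^{m-1})$ is cleaner than invoking compact supports on $X\times\R^m$.
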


Applying $\K^*_{G\times T}(-)$ to (\ref{equation:cofibre sequence}) and appealing to Theorem \ref{theorem:K of a free action} gives the long exact sequence
$$
\cdots\lra\K^0_{G\times T}(S^{V\otimes z})\lra\K^0_{G\times T}\lra\K^0_G(\C P(V)_+)\lra\K^1_{G\times T}(S^{V\otimes z})\lra\cdots.
$$

\begin{proposition}
We have $K^0_G(\C P(V))\cong\frac{R(G)[z]}{\chi(V\otimes z)}$.
\end{proposition}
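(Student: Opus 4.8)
The plan is to read the computation off from the long exact sequence displayed just before the statement, which already incorporates Theorem~\ref{theorem:K of a free action} together with the identification $\C P(V)\cong S(V\otimes z)/T$. First I collect the three inputs. The disc $D(V\otimes z)$ is $(G\times T)$-equivariantly contractible, so $\K^0_{G\times T}(D(V\otimes z)_+)=K^0_{G\times T}=R(G\times T)=R(G)[z,z^{-1}]$, concentrated in even degrees. By equivariant Bott periodicity $\K^*_{G\times T}(S^{V\otimes z})\cong\K^*_{G\times T}(S^0)=K^*_{G\times T}$, free of rank one on the Bott class $\tau(1)$ in degree $0$ and zero in odd degrees; in particular $\K^1_{G\times T}(S^{V\otimes z})=0$. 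Substituting these into the long exact sequence leaves the four-term exact sequence
$$\K^0_{G\times T}(S^{V\otimes z})\stackrel{f}{\lra}K^0_{G\times T}\lra K^0_G(\C P(V))\lra 0,$$
so that $K^0_G(\C P(V))\cong\mathrm{coker}(f)$.

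Next I identify $f$. It is induced by the collapse map $D(V\otimes z)_+\lra S^{V\otimes z}$ of (\ref{equation:cofibre sequence}); precomposing with the equivalence $S^0\simeq D(V\otimes z)_+$ given by inclusion of the origin turns this into the inclusion $i_{V\otimes z}\colon S^0\lhra S^{V\otimes z}$. Hence $f(\tau(1))=i_{V\otimes z}^*\tau(1)=\chi(V\otimes z)$, by the definition of the Euler class, and since $\tau(1)$ is a free $K^0_{G\times T}$-module generator and $f$ is $K^0_{G\times T}$-linear, $f$ is multiplication by $\chi(V\otimes z)$ on $R(G)[z,z^{-1}]$. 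Therefore $K^0_G(\C P(V))\cong R(G)[z,z^{-1}]/(\chi(V\otimes z))$.

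It remains to match this with the polynomial ring in the statement. Since $\lambda^r(V\otimes z)=\lambda^r(V)z^r$, the Bott periodicity formula for $\chi$ gives $\chi(V\otimes z)=\sum_{r=0}^{n}(-1)^r\lambda^r(V)z^r$ with $n=\dim_\C V$, a polynomial in $z$ whose constant term is $1$. The relation $\chi(V\otimes z)=0$ therefore writes $1$ as $z$ times an element of $R(G)[z]$, so $z$ is invertible in $A:=R(G)[z]/(\chi(V\otimes z))$; consequently the quotient map $R(G)[z]\to A$ extends uniquely to the localization $R(G)[z,z^{-1}]$, and since it kills $\chi(V\otimes z)$ it factors through a map $R(G)[z,z^{-1}]/(\chi(V\otimes z))\to A$ which is a two-sided inverse to the natural map $R(G)[z]/(\chi(V\otimes z))\to R(G)[z,z^{-1}]/(\chi(V\otimes z))$. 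This gives $R(G)[z,z^{-1}]/(\chi(V\otimes z))\cong R(G)[z]/(\chi(V\otimes z))$ and completes the proof.

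None of these steps looks like a serious obstacle: the exactness is formal once Bott periodicity kills the odd group, the identification of $f$ with the Euler class is exactly the content of the Thom isomorphism, and the passage from the Laurent ring to the polynomial ring is elementary commutative algebra. The part needing the most care is the bookkeeping in the second step: one must track the Bott-class identification and the equivalence $S^0\simeq D(V\otimes z)_+$ simultaneously, so that $f$ is seen to be multiplication by $\chi(V\otimes z)$ precisely, and not merely up to a unit in $R(G)$.
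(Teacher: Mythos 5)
Your proof is correct and follows essentially the same route as the paper: the long exact sequence from the cofibre sequence, Bott periodicity to kill the odd group, the Thom isomorphism/Euler class to identify the map as multiplication by $\chi(V\otimes z)$, and $R(G\times T)\cong R(G)[z,z^{-1}]$. The only differences are cosmetic: you compute the cokernel directly (omitting the injectivity of $\psi$, which the paper includes to get a short exact sequence reused later for homology), and you make explicit the identification of the boundary map and the passage from $R(G)[z,z^{-1}]$ to $R(G)[z]$, which the paper leaves implicit.
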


\begin{proof}
We claim that the long exact sequence above gives a short exact sequence
\begin{equation}\label{equation:ses}
0\lra R(G\times T)\stackrel{\psi}{\lra}R(G\times T)\lra\K^0_G(\C P(V)_+)\lra0.
\end{equation}
Indeed, by equivariant Bott periodicity
$\K^1_{G\times T}(S^{V\otimes z})\cong\K^1_{G\times T}(S^0)=0$.
The Thom isomorphism tells us
that $\K^0_{G\times T}(S^{V\otimes z})\cong K^0_{G\times T}$
and, by definition of the Euler class, $\im(\psi)$ is the ideal generated by $\chi(V\otimes z)$. The fact that multiplication by the Euler class is injective in (\ref{equation:ses})
follows since $\widetilde{K}^{-1}_{G \times T}(S^{V\tensor z})=0$. 
The first isomorphism theorem now tells us that
$$
\K^0_G(\C P(V)_+)\cong\frac{R(G\times T)}{\chi(V\otimes z)},
$$
and we observe \cite{ja:lolg} that $R(G\times T)\cong R(G)[z,z^{-1}]$, from which the proposition follows.
\end{proof}

When we come to consider homology, the Adams isomorphism takes the role of Theorem \ref{theorem:K of a free action} and we have a subtle dimension shift, viz
$$
\K_0^G(\C P(V)_+)\cong\K_{-1}^{G\times T}(S(V\otimes z)_+).
$$
Excepting this technical point, we find in a similar fashion a short exact sequence
\begin{equation}\label{equation:homology ses}
0\lra R(G\times T)\stackrel{\psi}{\lra}R(G\times T)\lra\K_0^G(\C P(V)_+)\lra0,
\end{equation}
in which $\psi$ is again multiplication by the Euler class.

We now choose a notation which will be convenient for comparing results for projective
spaces of different representations in \S\ref{section:calculations}.

\begin{proposition}
We have $K_0^G(\C P(V))\cong\frac{\frac{1}{\chi(V\otimes z)}R(G\times T)}{R(G\times T)}$,
where $\frac{1}{\chi(V\otimes z)}R(G\times T)$ is the $R(G\times T)$-submodule
generated by $\frac{1}{\chi (V\otimes z)}$ in the total ring of fractions of
$R(G\times T)$.
\end{proposition}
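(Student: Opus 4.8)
The plan is to read the statement off directly from the short exact sequence (\ref{equation:homology ses}). By definition of reduced homology we have $K_0^G(\C P(V))=\widetilde{K}_0^G(\C P(V)_+)=\K_0^G(\C P(V)_+)$, so (\ref{equation:homology ses}) exhibits $K_0^G(\C P(V))$ as the cokernel of the map $\psi$ on $R(G\times T)$ given by multiplication by the Euler class $\chi(V\otimes z)$. Everything in sight is a homomorphism of $R(G\times T)$-modules, the structure coming (as in the cohomology calculation) from the $(G\times T)$-equivariant description before passing to the free quotient, so the isomorphism we produce will automatically be one of $R(G\times T)$-modules.

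First I would note that the injectivity of $\psi$, established in the discussion preceding (\ref{equation:homology ses}), says precisely that $\chi(V\otimes z)$ is a non-zero-divisor in $R(G\times T)$. Hence $\chi(V\otimes z)$ becomes a unit in the total ring of fractions, the element $\frac{1}{\chi(V\otimes z)}$ is well defined there, and the $R(G\times T)$-submodule $\frac{1}{\chi(V\otimes z)}R(G\times T)$ it generates is a legitimate object containing $R(G\times T)$. Multiplication by $\frac{1}{\chi(V\otimes z)}$ inside the total ring of fractions then defines a homomorphism of $R(G\times T)$-modules
$$
R(G\times T)\stackrel{\cong}{\lra}\frac{1}{\chi(V\otimes z)}R(G\times T),
$$
which is injective because $\chi(V\otimes z)$ is a non-zero-divisor and surjective by construction. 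Under it the submodule $\im(\psi)=\chi(V\otimes z)R(G\times T)$ is carried onto $R(G\times T)\subseteq\frac{1}{\chi(V\otimes z)}R(G\times T)$, so passing to quotients and invoking (\ref{equation:homology ses}) gives
$$
K_0^G(\C P(V))\;\cong\;\frac{R(G\times T)}{\chi(V\otimes z)R(G\times T)}\;\cong\;\frac{\frac{1}{\chi(V\otimes z)}R(G\times T)}{R(G\times T)},
$$
which is the assertion.

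I do not expect a real obstacle here: the proof is a formal manipulation of the short exact sequence (\ref{equation:homology ses}), and the only point that needs care — that $\frac{1}{\chi(V\otimes z)}$ exists in the total ring of fractions — is already supplied by the injectivity of $\psi$. As the surrounding text indicates, the sole reason to prefer this presentation over the isomorphic $R(G\times T)/(\chi(V\otimes z))$ is bookkeeping: it makes the maps induced by inclusions $\C P(V)\hookrightarrow\C P(V')$ transparent in the calculations of \S\ref{section:calculations}.
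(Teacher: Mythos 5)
Your proposal is correct and is essentially the paper's own argument made explicit: the paper simply replaces the short exact sequence (\ref{equation:homology ses}) by the isomorphic sequence $0\to R(G\times T)\hookrightarrow\frac{1}{\chi(V\otimes z)}R(G\times T)\to\K_0^G(\C P(V)_+)\to0$, and the isomorphism of sequences it implicitly uses is exactly the multiplication-by-$\frac{1}{\chi(V\otimes z)}$ map you write down. Your added remarks on the non-zero-divisor property and the $R(G\times T)$-module structure are just the details the paper leaves to the reader.
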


\begin{proof}
Just replace the short exact sequence (\ref{equation:homology ses}) with
the isomorphic short exact sequence
\begin{equation}\label{equation:homology better ses}
0\lra R(G\times T)\lhra\frac{1}{\chi(V\otimes z)}R(G\times T)\lra\K_0^G(\C P(V)_+)\lra0.
\end{equation}
%To be horrifically explicit, the map $R(G\times T)\lhra\frac{1}{\chi(V\otimes z)}R(G\times T)$ is given by $\rho\lmt\frac{\chi(V\otimes z)\rho}{\chi(V\otimes z)}$ for $\rho\in R(G\times T)$.
\end{proof}

%% \begin{remark}
%% The reader may be curious why we choose to replace (\ref{equation:homology ses})
%% with (\ref{equation:homology better ses}). The advantage of the new notation is
%% in comparing the exact sequences (\ref{equation:homology better ses})
%% for different representations $V$.
%% %is an inclusion, which means that for representations $V,W$ of $G$ we have
%% %\begin{equation}\label{equation:chi and inclusion}
%% %\frac{1}{\chi(V\otimes z)}=\frac{\chi(W\otimes z)}{\chi((V\oplus W)\otimes z)}\in K_0^G(\C P(V\oplus W)).
%% %\end{equation}
%% \end{remark}

\subsection{Duality from the Universal Coefficient Theorem.}

It is convenient to record a simple case of the algebraic relation between
homology and cohomology. For any ring $G$-spectrum $E$ and any $G$-spectrum
$Y$ we have a natural map
$$
p_Y: E^*_G(Y){\lra}\hom_{E_*^G}{(E^G_*(Y),E_*^G)}.
$$
A suitable Universal Coefficient Theorem (UCT) would state that
$p_Y$ is an isomorphism if $E^G_*(X)$ is projective as an
$E_*^G$-module. In equivariant topology the existence of such a
UCT is more than the formality it is non-equivariantly
\cite{ae:rmaaisht}, for a variety of linked reasons. From one
point of view, the issue is that on the one hand the usual
building blocks of $G$-spaces are the orbits $G/H$, whilst on the
other $E_G^*(G/H)\cong E_H^*$ is unlikely to be projective. For
these reasons, the sort of UCT that exists for formal reasons
\cite{LM, mj:hcfekt} is based on Mackey functor valued homology
and cohomology. Since this does not directly discuss $p_Y$,
additional work is required, which relies upon special properties
of the cohomology theory, or the group of equivariance, or the
space. For $K$-theory, one does expect a UCT for general
$G$-spaces, but for present purposes we will be content to prove
the very special case that concerns us.

\begin{lemma}\label{lemma:homology and cohomology are dual}
If $X=\C P (V)$ then
\label{lemma:homology and cohomology are dual:enumi:1}
we have isomorphisms
$$
K^*_G(X)\stackrel{\cong}{\lra}\hom_{K_*^G}{(K^G_*(X),K_*^G)}
$$
and
$$
K_*^G(X)\stackrel{\cong}{\lra}\hom_{K^*_G}{(K^*_G(X),K^*_G)}.
$$
%\end{enumerate}
\end{lemma}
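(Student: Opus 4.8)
The plan is to reduce both isomorphisms to a single fact in commutative algebra. By the two propositions of Subsection~\ref{subsec:KCPV} we have $K^0_G(X)\cong A$, where $A:=R(G\times T)/(\chi(V\tensor z))$, and $K_0^G(X)\cong\frac{1}{\chi(V\tensor z)}R(G\times T)/R(G\times T)$; multiplication by $\chi(V\tensor z)$ identifies the latter with $A$ as an $A$-module. Since $\chi(V\tensor z)=\sum_{r=0}^n(-1)^r(\lambda^rV)z^r$ with $n=\dim_\C V$ and $\lambda^nV=\det V$ a unit of $R(G)$, dividing by this unit turns $\chi(V\tensor z)$ into a \emph{monic} polynomial of degree $n$ in $z$ with unit constant term; hence $A\cong R(G)[z]/(\text{monic})$ is free over $R(G)$ on $1,z,\dots,z^{n-1}$. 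Thus $K^0_G(X)$ and $K_0^G(X)$ are both free $R(G)$-modules of rank $n$ (the freeness mentioned after~\eqref{equation:injective restriction}), and the odd-degree parts vanish, so it is enough to treat the statements in degree zero.

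I would first record two formal points. \textbf{(a)} Unwinding the definition, the natural map $p_X\colon K^0_G(X)\lra\hom_{R(G)}(K_0^G(X),R(G))$ is a homomorphism of $K^0_G(X)$-modules (the target carrying the structure dual to the cap product), and $p_X(c)=c\cdot p_X(1)$, where $p_X(1)$ is the augmentation $\varepsilon_*\colon K_0^G(X)\lra K_0^G(S^0)=R(G)$ induced by $X_+\lra S^0$; and the homological map $K_0^G(X)\lra\hom_{R(G)}(K^0_G(X),R(G))$ is its transpose with respect to the Kronecker pairing. \textbf{(b)} For any commutative ring $R$ and any monic $f\in R[z]$ of degree $n$, the $R$-module $\hom_R(R[z]/(f),R)$ is free of rank one over $R[z]/(f)$: a generator is the functional $\delta$ with $\delta(z^{n-1})=1$ and $\delta(z^i)=0$ for $i<n-1$, since the pairing $(x,y)\mapsto\delta(xy)$ has, in the basis $1,z,\dots,z^{n-1}$, a Hankel matrix that is anti-triangular with $1$'s down the anti-diagonal, hence of determinant $\pm1$. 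Combining (a), (b) and the identification $K_0^G(X)\cong A$, the map $p_X$ is multiplication on the free rank-one $A$-module $\hom_{R(G)}(A,R(G))$ by the element of $A$ that represents $\varepsilon_*$; so both maps of the lemma are isomorphisms \emph{if and only if} that element is a unit of $A$, equivalently if and only if $\varepsilon_*$ generates $\hom_{R(G)}(A,R(G))$ over $A$, equivalently if and only if the Kronecker pairing $K^0_G(X)\tensor_{R(G)}K_0^G(X)\lra R(G)$ is perfect.

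It remains to identify the augmentation $\varepsilon_*$, and this is the step I expect to be the main obstacle. I would trace the homology computation of Subsection~\ref{subsec:KCPV} back through the cofibre sequence~\eqref{equation:cofibre sequence} and the Adams isomorphism, which replaces Theorem~\ref{theorem:K of a free action} in homology and is responsible for the ``subtle dimension shift''. At the level of $(G\times T)$-spectra the collapse $X_+\lra S^0$ is covered by the inclusion $S(V\tensor z)_+\lra D(V\tensor z)_+$, and the transfer realising $\K_0^G(X_+)\cong\K_{-1}^{G\times T}(S(V\tensor z)_+)$ is essentially integration over the circle fibre of $S(V\tensor z)\lra\C P(V)$; carried through all the identifications, $\varepsilon_*$ becomes, up to a unit of $R(G)$, the functional on $\frac{1}{\chi(V\tensor z)}R(G\times T)/R(G\times T)$ sending $g(z)/\chi(V\tensor z)$ (with $g\in R(G)[z]$ of degree $<n$) to the coefficient of $z^{-1}$ in its expansion as a formal Laurent series in $z^{-1}$. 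On the basis $\{z^j/\chi(V\tensor z)\}_{0\le j\le n-1}$ this functional vanishes for $j<n-1$ and is a unit for $j=n-1$, i.e. it is the generator $\delta$ of (b) up to a unit; hence $\varepsilon_*$ generates and the lemma follows. (Alternatively, one can sidestep this computation by reducing modulo each maximal ideal of $R(G)$: the localisation theorem in equivariant $K$-theory identifies $K^*_G(X)$ and $K_*^G(X)$, after such a base change, with the $K$-(co)homology of a disjoint union of ordinary complex projective spaces, whose $K$-homology is free, so the classical universal coefficient theorem applies; at the trivial group this is just the classical statement.) Apart from this identification of $\varepsilon_*$ through the Adams isomorphism, everything is the bookkeeping of Subsection~\ref{subsec:KCPV} together with the elementary fact that a monic hypersurface quotient of $R(G)[z]$ is self-dual over $R(G)$.
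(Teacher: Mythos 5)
Your reduction is attractive and its purely algebraic ingredients are sound: $K^0_G(X)\cong A=R(G)[z]/(\chi(V\otimes z))$ is free over $R(G)$ on $1,z,\dots,z^{n-1}$, the dual $\hom_{R(G)}(A,R(G))$ is free of rank one over $A$ with generator dual to $z^{n-1}$, and $p_X(c)=c\cdot p_X(1)$ with $p_X(1)=\varepsilon_*$ (granting, as you implicitly do, that the $R(G\times T)$-module structure on $K_0^G(X)$ coming from the Adams-isomorphism computation agrees with the cap-product module structure). The genuine gap is the load-bearing step: your identification of $\varepsilon_*$ is only asserted (``integration over the circle fibre''), and it is false as stated. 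Already for $G=1$ and $\dim_\C V=2$, write $\chi=(1-z)^2$, $y=1-z$, and let $\beta_0,\beta_1$ be dual to $1,y$. By Theorem \ref{theorem:sum of betas is 1/chi} (non-equivariantly this is Adams's classical computation) one has $1/\chi=\beta_0+\beta_1$, and with the module structure you use, $z/\chi=z\cdot(1/\chi)=\beta_1$; hence $\varepsilon_*=\langle 1,-\rangle$ takes the values $1$ and $0$ on the basis $\{1/\chi,\ z/\chi\}$, whereas your coefficient-of-$z^{-1}$ functional takes the values $0$ and $1$. The two differ by the unit $-z^{-1}$ of $A$, not by a unit of $R(G)=\mathbb{Z}$. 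The corrected statement (that $\varepsilon_*$ is the residue generator times a unit of $A$) would indeed suffice, but proving it is exactly the content that is missing: by your own reduction, ``$\varepsilon_*$ generates'' is equivalent to perfectness of the pairing $\lceil x,y\rceil=\langle xy,\tfrac{1}{\chi(V\otimes z)}\rangle$ of Notation \ref{notation:pairing}, which the paper deduces \emph{from} this lemma together with the fundamental-class theorem, so you cannot take it as input here. The parenthetical fallback (reduction modulo maximal ideals of $R(G)$ plus the localisation theorem) is likewise only a gesture: it requires a localisation statement in homology as well as cohomology and an identification of $p_X$ after base change, none of which is supplied.

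For comparison, the paper never identifies $\varepsilon_*$ at all. It proves $p_X$ is an isomorphism by induction on one-dimensional summands when $V$ splits (hence for abelian $G$), using the cofibre sequence $\C P(W)\lra\C P(V)\lra S^{W\otimes\alpha^{-1}}$ and the resulting short exact sequences of free modules; for general $G$ it obtains injectivity of $p_X$ and $p_{DX}$ from the injectivity of restriction to cyclic subgroups (\ref{equation:injective restriction}), and then a double-duality square (using $X\simeq D^2X$ and the finite generation and freeness of $K_*^G(X)$) exhibits the second comparison map as the algebraic dual of $p_X$, hence an epimorphism, hence an isomorphism. If you wish to rescue your approach you would need either an honest computation of $\varepsilon_*$ through the Adams isomorphism that keeps track of units of $A$ rather than of $R(G)$, or a direct proof that the matrix $\bigl(\langle y^iy^j,1/\chi(V\otimes z)\rangle\bigr)_{i,j}$ is invertible over $R(G)$ (this is carried out in \cite{gw:pdiektfc}); at that point the restriction-to-cyclic-subgroups argument of the paper is the shorter route.
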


\begin{proof}

Taking $E=K$, $p_X$ gives the first comparison map, and applying Spanier-Whitehead duality to $p_{DX}$ gives the second.

First, we prove that if $V$ is a sum of one dimensional representations
the map $p_X$ is an isomorphism. The same argument shows $p_{DX}$
is an isomorphism. We argue by induction on the dimension of $V$. If
$V$ is one dimensional then $\C P (V)$ is a point and the conclusion
is clear. Now suppose that  $V=W\oplus \alpha$ with
$\alpha $ one dimensional, and that $p_{\C P(W)}$ is known to be
an isomorphism.  There is a cofibre sequence
$$\C P (W) \lra \C P (V) \lra S^{W \tensor \alpha^{-1}}, $$
which induces a short exact sequence of free $K_G^*$-modules
in both homology and cohomology. Since $p_{S^{W \tensor \alpha^{-1}}}$
is an isomorphism, we conclude $p_{\C P (V)}$ is an isomorphism as
required.

This shows that $p_{\C P (V)}$ is an isomorphism for all $V$ if
$G$ is abelian, and we now consider the general case. We have a commutative square
$$
\begin{xy}
\xymatrix{
K^*_G(X)\ar[r]^-{p_X}\ar[d]&\hom_{K_*^G}{(K^G_*(X),K_*^G)}\ar[d]\\
\prod\limits_{\genfrac{}{}{0pt}{}{H\leq G}{H\mathrm{\ cyclic}}}K^*_H(X)\ar[r]^-\cong&\prod\limits_{\genfrac{}{}{0pt}{}{H\leq G}{H\mathrm{\ cyclic}}}\hom_{K_*^H}{(K^H_*(X),K_*^H)}.
}
\end{xy}
$$

\noindent Since the left hand
vertical is the monomorphism (\ref{equation:injective restriction}), it follows that $p_X$ is a monomorphism.
The same applies to $p_{DX}$.

We also have a commutative square
$$
\begin{xy}
\xymatrix{
K_*^G(X)\ar[r]^-\cong\ar[d]_\cong&K_*^G(D^2X)\ar[d]\\
\hom_{K_*^G}(\hom_{K_*^G}(K_*^G(X),K_*^G),K_*^G)\ar[r]^-{(p_X)^*}&\hom_{K_*^G}(K^*_G(X),K_*^G).
}
\end{xy}
$$

\noindent The top horizontal is an isomorphism because $X$ is
finite, so that the natural map $X \stackrel{\simeq}\lra D^2X$ is
an equivalence. The left hand vertical is an isomorphism because
$K_*^G(X)$ is a finitely generated free module. The right hand
vertical is $p_{DX}$, combined with Spanier-Whitehead duality, so
that the composite obtained by travelling the square first
horizontally, then vertically, is the second comparison map. This
shows that the second comparison map is the algebraic dual of
$p_X$. Since $p_X$ is a monomorphism, duality shows that the
second comparison map is an epimorphism, and hence an
isomorphism. The first comparison map is dealt with similarly.
\end{proof}

\begin{remark}
There is an alternative approach to the duality statement which is perhaps more illuminating
from the algebraic point of view. Writing $R=R(G)$ and $S=R(G\times T)$, and $\chi =
\chi (V\otimes z)$
we calculated the homology
$$K^{G}_0(\Sigma^2\C P (V)) =S/\chi$$
from the short exact sequence arising from the sequence of $G\times T$-spaces
$S^0 \lra S^{V\otimes z} \lra \Sigma S(V\otimes z)_+$,
which we regard as a projective resolution over $S$. This means that the cohomology
of $S(V\otimes z)_+ \lra S^0 \lra S^{V\otimes z}$ shows
$$K_{G}^0(\C P (V)) =\ext^1_S(S/\chi,S).$$
Thus the UCT duality statement is
$$\ext^1_S(S/\chi,S) \cong \hom_R(S/\chi ,R), $$
 and one can write down the isomorphism explicitly in these terms. Furthermore,
the short exact sequence
$$0 \lra S \stackrel{\chi}\lra S \lra \ext^1_S(S/\chi,S) \lra 0$$
can be viewed as an exact sequence of $R$-modules; since the $R$-modules
are all free, applying $(\cdot)^*=\hom_R(\cdot , R)$
we see the more elementary isomorphism
$$\ext^1_S(S/\chi,S)^* \cong \hom_R(S/\chi , R)$$
which corresponds to Poincar\'e duality.
By contrast with topology, from the algebraic point of view, it is the UCT that
is the more subtle statement, and Poincar\'e duality that is formal.
\end{remark}

\subsection{The fundamental class}
The following identification of the fundamental class is the key result of the paper.
\begin{theorem}\label{theorem:fundamental class}
Let $G$ be a finite group and $V$ a complex representation of $G$ with $\dim_\C(V)=n$. Then $\frac{1}{\chi(V\otimes z)}\in K_0^G(\C P(V))$ is a fundamental class in equivariant $K$-theory for $\C P(V)$.
\end{theorem}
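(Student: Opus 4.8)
The plan is to verify the defining condition of Definition \ref{definition:G-fundamental class}(ii) directly: for every $x\in \C P(V)$ I must check that the image of $\frac{1}{\chi(V\otimes z)}$ under the composite $\phi_{Gx}$ of diagram (\ref{equation:equivariant fundamental class}) generates $\widetilde K_*^{G_x}(S^{V_x})$ as a free $K_*^{G_x}$-module of rank one. The first step is to make the geometry explicit: a point $x\in\C P(V)$ is a line $L\subseteq V$, its isotropy is $H:=G_x=G_L$, and the tangent space is the familiar $V_x = \hom_\C(L, V/L) = L^{-1}\otimes(V/L)$ as an $H$-representation. So the target of $\phi_{Gx}$ is $\widetilde K_*^H(S^{L^{-1}\otimes(V/L)})$, which by the Thom isomorphism (equivalently Bott periodicity) is free of rank one over $K_*^H = R(H)$ with generator the Bott/Thom class; concretely, identifying this with $R(H)$ via the Thom class, a class is a generator precisely when it is a unit of $R(H)$.

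The second step, which I expect to be the technical heart, is to trace $\frac{1}{\chi(V\otimes z)}$ through the chain of maps. Here I would exploit the restriction compatibility of Lemma \ref{resrep}: the map $\phi_{Gx}$ factors (up to the change-of-groups isomorphism $\phi$) through the restriction $\res{G}{H}$ followed by the local homology computation for the $H$-manifold $\C P(V)|_H$ at the $H$-fixed point $L$. Since $\res{G}{H}$ sends $\frac{1}{\chi(V\otimes z)}\in K_0^G(\C P(V))$ to $\frac{1}{\chi(\res{G}{H}V\otimes z)}\in K_0^H(\C P(\res{G}{H}V))$ — this is immediate from the Proposition presentation $K_0^G(\C P(V))\cong \frac{1}{\chi(V\otimes z)}R(G\times T)/R(G\times T)$ and naturality of the construction in $G$ — I may assume from the outset that $G=H$ is such that $L$ is a \emph{fixed} line, i.e. $G$ acts trivially on $L$. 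Writing $V = L\oplus W$ with $W=V/L$ as $G$-representations and $L$ trivial (so $L$ is the trivial character and $z$ restricted along $L$ is still $z$), the splitting $S^{V\otimes z} = S^{z}\wedge S^{W\otimes z}$ gives $\chi(V\otimes z) = \chi(z)\cdot\chi(W\otimes z) = (1-z)\chi(W\otimes z)$ in $R(G\times T)$. The class $\frac{1}{\chi(V\otimes z)}$ then needs to be pushed into the local term $\widetilde K_*^G(S^{W\otimes z})$ — note $V_x = L^{-1}\otimes W = W$ here — via the map induced by the inclusion of the point $L$ into $\C P(V)$, i.e. by the cofibre map $\C P(W')\to \C P(V)\to S^{W\otimes z}$ realizing $L$ as the "point at infinity" of the affine chart complementary to $\C P(W)$. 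On homology (using the short exact sequences (\ref{equation:homology better ses}) and their analogues for the subrepresentation $W\subseteq V$) this map is, up to the Thom/Bott identification, reduction of $\frac{1}{(1-z)\chi(W\otimes z)}R(G\times T)$ modulo $\frac{1}{\chi(W\otimes z)}R(G\times T)$, followed by the Adams/transfer quotient by $T$; the upshot is that the image of our class corresponds to a \emph{unit} in $R(G\times T)/(\text{terms killed by passing to }\C P(W))$, and after the dimension-shifting Adams isomorphism and Bott periodicity this unit lands in $R(G) = K_*^G$ as $1$ (or at worst a unit), hence is a module generator of $\widetilde K_*^G(S^{V_x})$.

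The third step is bookkeeping: one must confirm that the various identifications (Thom isomorphism normalization, the sign/direction of the Adams transfer, the change-of-groups isomorphisms $\phi$ and $\theta$) are consistent across all $x$ simultaneously, so that a \emph{single} class $\frac{1}{\chi(V\otimes z)}$ works for every $x$ at once rather than merely for each $x$ after an $x$-dependent rescaling. Here the naturality already packaged into Lemma \ref{resrep} and the functoriality of the cofibre-sequence computation in $\S\ref{subsec:KCPV}$ do the work: because $\res{G}{*}$ of (\ref{equation:injective restriction}) is injective and everything in sight is defined by universal constructions, it suffices to check generation after restricting to cyclic subgroups and then to points fixed by those subgroups, which reduces us to the abelian — indeed the cyclic — case handled concretely above. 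The main obstacle, as flagged, is the careful computation of the composite $\phi_{Gx}$ on the explicit element $\frac{1}{\chi(V\otimes z)}$, in particular getting the interaction of the $(1-z)$ Euler factor with the Adams isomorphism's dimension shift exactly right; everything else is either classical (Poincaré duality machinery) or already established in the preceding sections.
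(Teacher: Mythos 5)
Your overall skeleton (reduce to the isotropy group $H=G_x$, where $x$ becomes a fixed point, using restriction-compatibility of $\frac{1}{\chi(V\otimes z)}$) matches the paper, but there is a genuine gap at the heart of the local computation. You assert that you ``may assume $G=H$ is such that $L$ is a fixed line, i.e.\ $G$ acts trivially on $L$''. An $H$-fixed point of $\C P(V)$ is an $H$-invariant line, i.e.\ a one-dimensional subrepresentation with character $\alpha$ that is in general nontrivial, and there is no cost-free way to arrange $\alpha=1$: twisting $V$ by $\alpha^{-1}$ changes the tautological class $z$ in the presentation $K^0_G(\C P(V))\cong R(G)[z]/(\chi(V\otimes z))$, so tracking the effect on $\frac{1}{\chi(V\otimes z)}$ is exactly the issue you are trying to avoid. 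Your own (correct) formula $V_x=L^{-1}\otimes(V/L)$ already shows the twist matters, yet you then set $V_x=W$. Concretely, the factorisation should read $\chi(V\otimes z)=(1-\alpha z)\chi(W\otimes z)$, and — more importantly — the key difficulty never gets addressed: the local term $K_0^G(\C P(V),\C P(V)\setminus Gx)\cong\widetilde{K}_0^{G}(S^{V_x})$ arises from the Atiyah--Segal/Adams computation as an $R(G\times T)$-module, and while it is easy to see that the image of $\frac{1}{\chi(V\otimes z)}$ generates over $R(G\times T)$ (essentially your Bott-class remark; this is the paper's Lemma \ref{lemma:step 1}), the definition of fundamental class requires a generator over $K_*^{G}=R(G)$. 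The paper devotes a separate argument (Lemma \ref{lemma:step 2}) to showing that $z$ acts on this local term as $\alpha^{-1}\in R(G)$, via $S(V\otimes z)/S(W\otimes z)\simeq S(\alpha\otimes z)_+\wedge S^{W\otimes z}$, the identification of $\kappa=\ker(\alpha\otimes z)$ with $G$ by $g\mapsto(g,\alpha(g)^{-1})$, and $f^*(W\otimes z)=W\otimes\alpha^{-1}$; only then does $R(G\times T)$-generation give $R(G)$-generation. With $\alpha$ silently set to $1$ this issue disappears from view, which is why your sketch seems to go through; as written it covers only fixed lines with trivial character.

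A secondary problem: your closing reduction ``it suffices to check generation after restricting to cyclic subgroups'' does not follow from the injectivity of $\res{G}{*}$ in (\ref{equation:injective restriction}). Injectivity detects equality of elements, not whether the coefficient of a basis element is a unit of $R(H)$; detecting units on cyclic subgroups would need Segal's description of the prime spectrum of the representation ring, which you neither cite nor need. The paper avoids this entirely: after reducing to the isotropy group $H$ via the diagram of Lemma \ref{lemma:step 3}, the fixed-point case is handled for an arbitrary finite group directly (no abelian or cyclic reduction enters this theorem; that only appears later when comparing $\frac{1}{\chi(V\otimes z)}$ with the dual basis elements $\beta_i$). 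Finally, the Adams-isomorphism dimension shift that you flag as ``the main obstacle'' is left unresolved in your sketch; the paper disposes of it with the explicit diagram comparing the short exact sequences (\ref{equation:homology ses}) for $W$ and $V$ with the sequence of the triple $(D(V\otimes z),S(V\otimes z),S(W\otimes z))$.
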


We break our proof into convenient pieces as follows. For brevity we write $\notensor{V}{z}$ for $V\otimes z$, {\em etc}.

\begin{lemma}\label{lemma:res(1/chi)=1/chi}
The notation is compatible with restriction, in the sense that for any
subgroup $H$ of $G$, we have
 $\res{G}{H}\left(\frac{1}{\chi(\notensor{V}{z})}\right)=\frac{1}{\chi(\notensor{V}{z})}$.
\end{lemma}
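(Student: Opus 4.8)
The plan is to unwind both sides of the claimed equality through the explicit descriptions we have available. The element $\frac{1}{\chi(\notensor{V}{z})} \in K_0^G(\C P(V))$ lives in the quotient $\frac{1}{\chi(\notensor{V}{z})}R(G\times T)\big/R(G\times T)$ coming from the short exact sequence (\ref{equation:homology better ses}), so I must first understand how restriction $\res{G}{H}$ acts on this presentation. The key point is that the entire derivation of (\ref{equation:homology better ses}) — the cofibre sequence (\ref{equation:cofibre sequence}), the Adams isomorphism, the Thom isomorphism and the resulting identification of $\psi$ with multiplication by $\chi(\notensor{V}{z})$ — is natural in $G$, and in particular is compatible with passing from $G\times T$ to $H\times T$. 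So I would begin by writing down the analogous short exact sequence for $H$ and checking that the restriction map $K_0^G(\C P(V)) \to K_0^H(\C P(V))$ is induced by a commutative ladder whose middle and left vertical arrows are the restriction $R(G\times T) \to R(H\times T)$.

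Next I would use Lemma \ref{resrep}, which represents restriction in homology by smashing with $D(\pi)\colon DG/H_+ \to DG/G_+$ (equivalently by the transfer-type map dual to the projection $G/H_+ \to G/G_+$). Combined with the naturality of the Thom isomorphism and of the Euler class under restriction of representations — so that $\res{G\times T}{H\times T}\chi(\notensor{V}{z}) = \chi(\res{G}{H}(V)\tensor z)$, which by the Bott periodicity formula is just the image of $\chi(\notensor{V}{z})$ under $R(G\times T) \to R(H\times T)$ — this shows that on the level of the fraction-ring presentations, $\res{G}{H}$ is simply the map induced by $R(G\times T) \to R(H\times T)$ on the quotients $\frac{1}{\chi}R(G\times T)/R(G\times T)$. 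Under this identification the generator $\frac{1}{\chi(\notensor{V}{z})}$ manifestly maps to $\frac{1}{\chi(\notensor{V}{z})}$, since $1 \mapsto 1$ and $\chi \mapsto \chi$ (with $\chi$ now interpreted in $R(H\times T)$), which is exactly the asserted equality — the abuse of notation being precisely that we write $\chi(\notensor{V}{z})$ for its restriction.

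I expect the main obstacle to be bookkeeping rather than anything conceptual: one must be careful that the dimension shift introduced by the Adams isomorphism (the "$-1$" appearing in $K_0^G(\C P(V)_+) \cong K_{-1}^{G\times T}(S(V\tensor z)_+)$) is handled consistently on both the $G$ and $H$ sides, and that the Thom class used to trivialise $K^0_{G\times T}(S^{V\tensor z})$ restricts to the corresponding Thom class for $H\times T$ — this is where the explicit naturality of the Atiyah–Segal Thom isomorphism \cite[\S3]{gs:ekt} is needed. Once the ladder of short exact sequences is set up with compatible vertical maps, the conclusion is immediate. A cleaner alternative, if one prefers, is to invoke the injectivity of (\ref{equation:injective restriction}) and its homology analogue only at the very end: it suffices to check the identity after further restriction to cyclic subgroups, but in fact the argument above already works uniformly for all $H \leq G$, so this reduction is not actually required here.
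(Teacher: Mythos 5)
Your proposal is correct and follows essentially the same route as the paper: identify $\frac{1}{\chi(\notensor{V}{z})}$ with the canonical generator (the unit) in the $R(G\times T)$-presentation of $K_0^G(\C P(V))$ coming from the short exact sequence, and observe that this presentation is natural under restriction from $G\times T$ to $H\times T$, so the generator maps to the generator. The paper's proof is just a terser version of this, taking for granted the naturality checks (cofibre sequence, Adams and Thom isomorphisms, Euler class) that you spell out.
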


\begin{proof}
If we use \ref{equation:homology ses} to say $K^G_0(\C P (V))=R(G\times T )/(\chi (Vz))$
the element $1/\chi (Vz)$ corresponds to the unit of $R(G\times T)$. The
lemma  simply states that the restriction of the unit in $R(G\times T)$ is the unit in 
$R(H \times T)$.
\end{proof}

\begin{lemma}\label{lemma:step 1}
If $x\in\C P(V)$ is $G$-fixed, then $i^{Gx}_*(\frac{1}{\chi(\notensor{V}{z})})$
is an $R(G\times T)$-generator for $K_0^G(\C P(V),\C P(V)\setminus Gx)$.
\end{lemma}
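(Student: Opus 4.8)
The plan is to make the relative group $K_0^G(\C P(V),\C P(V)\setminus Gx)$ explicit and then track the element $1/\chi(\notensor{V}{z})$ through the excision isomorphism of Lemma \ref{lemma:G-excision leading to fundamental class}. Since $x$ is $G$-fixed, its isotropy is all of $G$, and the tangent space $V_x$ to $\C P(V)$ at the line $\ell = \C\langle v\rangle$ is $\hom_\C(\ell, V/\ell)$ as a $G$-representation. Writing $V = \ell \oplus W$ with $\ell$ one-dimensional, this is $W \otimes \ell^{-1}$, a representation of complex dimension $n-1$. So by \eqref{equation:G excision outcome} we have
$$
K_0^G(\C P(V),\C P(V)\setminus Gx)\cong\widetilde{K}_0^{G}(S^{W\otimes\ell^{-1}}),
$$
which by equivariant Bott periodicity is a free $R(G)$-module on one generator, namely the homology Bott class dual to $\tau(1)$. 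The content of the lemma is that the map $i^{Gx}_*$ carries our distinguished element to a generator of this rank-one free module.

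First I would identify $i^{Gx}_*$ at the level of the cofibre-sequence descriptions. Choosing $x$ to be the line $\ell$, the complement $\C P(V)\setminus Gx$ deformation retracts onto $\C P(W)$, so the pair $(\C P(V),\C P(V)\setminus Gx)$ has the homology of the cofibre $\C P(V)/\C P(W)$, which is the Thom space $\C P(W\oplus\ell)^{W\otimes\ell^{-1}}$ of the tautological-type bundle — and the relevant point for us is that on the top cell this collapse is exactly the inclusion of $S^{W\otimes\ell^{-1}}$. So the composite $\phi_{Gx}\circ i^{Gx}_*$ is, up to the already-established change-of-groups and Bott identifications, the map $K_0^G(\C P(V))\to\widetilde K_0^G(S^{W\otimes\ell^{-1}})$ induced by the collapse $\C P(V)_+\to\C P(V)/\C P(W)\to S^{W\otimes\ell^{-1}}$ (the last map being the Thom-space projection to the base-point-section quotient, i.e. the top cell). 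The plan is then to feed the short exact sequence \eqref{equation:homology ses} through this: the generator $1/\chi(\notensor Vz)$ of $K_0^G(\C P(V))\cong R(G\times T)/(\chi(\notensor Vz))$ is the image of $1\in R(G\times T)$, and I must check that the composite $R(G\times T)\to K_0^G(\C P(V))\to\widetilde K_0^G(S^{W\otimes\ell^{-1}})\cong R(G\times T)/(\chi(\notensor Wz))$ is the evident quotient map, or at worst differs from it by a unit.

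The mechanism for the last point is naturality of the cofibre-sequence construction for the inclusion $\C P(W)\hookrightarrow\C P(V)$: the collapse map $\C P(V)\to S^{W\otimes\ell^{-1}}$ is compatible with the collapse $S^{V\otimes z}\to S^{\ell\otimes z}$ (equivalently, restriction of the Thom class of $V\otimes z$ to that of $\ell\otimes z$ along the summand inclusion), so the induced map on the $R(G\times T)$ in the middle of \eqref{equation:homology ses} is multiplication by $\chi(W\otimes z)=\chi(\notensor Vz)/\chi(\ell\otimes z)$ — exactly what is needed to match $1/\chi(\notensor Vz)$ to $1/\chi(\notensor Wz)\cdot\chi(\ell\otimes z)^{-1}$, and $\chi(\ell\otimes z)$ is a unit modulo $\chi(\notensor Wz)$ precisely because $\chi(\notensor Vz)=\chi(W\otimes z)\chi(\ell\otimes z)$ shows it becomes a zero divisor only... actually, more simply: in $\widetilde K_0^G(S^{W\otimes\ell^{-1}})\cong R(G)$ (after killing $z$ appropriately via the Adams-type identification) the relevant element is $\chi(\ell\otimes\ell^{-1})=\chi(1)=0$-free — here I will instead verify directly that the image is the Bott generator by reducing to the case of the point $\C P(\ell)$ and using multiplicativity of Euler classes. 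The main obstacle, and the step deserving real care, is precisely this identification of $i^{Gx}_*$ with the cofibre collapse and the bookkeeping of which Euler-class factor appears: the geometry (tangent space $=W\otimes\ell^{-1}$, complement $\simeq\C P(W)$) is routine, but matching it cleanly with the algebraic presentation \eqref{equation:homology ses} — and in particular confirming that no non-unit scalar sneaks in — is where the argument must be done carefully rather than waved through.
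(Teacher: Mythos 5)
Your overall strategy --- excise down to the pair, identify $\C P(V)\setminus Gx\simeq\C P(W)$, and push $\frac{1}{\chi(\notensor{V}{z})}$ through the cofibre-sequence presentation --- is exactly the paper's, but the bookkeeping you yourself flag as the delicate step goes wrong, and the proof never recovers. The relative group $K_0^G(\C P(V),\C P(V)\setminus Gx)\cong K_{-1}^{G\times T}(S(\notensor{V}{z}),S(\notensor{W}{z}))$ is presented by the sequence of the pair as the cokernel of the map $\K_0^{G\times T}(S^{\notensor{W}{z}})\lra\K_0^{G\times T}(S^{\notensor{V}{z}})$ induced by the inclusion $S^{\notensor{W}{z}}\lhra S^{\notensor{V}{z}}$, which under the Bott identifications is multiplication by the Euler class of the \emph{complementary line}, $\chi(\notensor{\alpha}{z})$; so the relative group is $R(G\times T)/(\chi(\notensor{\alpha}{z}))$. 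You instead assert it is $R(G\times T)/(\chi(\notensor{W}{z}))$ and that the induced map is multiplication by $\chi(\notensor{W}{z})$: you have used the Thom-class restriction along the other summand inclusion $S^{\notensor{\alpha}{z}}\lhra S^{\notensor{V}{z}}$, which is not the map relevant to the pair $(S(\notensor{V}{z}),S(\notensor{W}{z}))$. Already nonequivariantly with $V=\C^3$ your formula gives $\bb{Z}[z,z^{-1}]/((1-z)^2)$, of rank two, whereas $\K_0(S^4)$ has rank one. This error then forces the claim that $\chi(\notensor{\alpha}{z})$ is a unit modulo $\chi(\notensor{W}{z})$, which is false (nonequivariantly $1-z$ is nilpotent, not a unit, in $K^0(\C P^{n-2})$); you visibly abandon that computation mid-sentence, and the substitute (``verify directly that the image is the Bott generator by reducing to $\C P(\ell)$ and using multiplicativity of Euler classes'') is an unproved assertion. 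So the generator statement, which is the entire content of the lemma, is never actually established.

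With the Euler factor corrected the lemma becomes nearly formal, and that is the paper's proof: both $K_0^G(\C P(V))\cong R(G\times T)/(\chi(\notensor{V}{z}))$ and the relative group $R(G\times T)/(\chi(\notensor{\alpha}{z}))$ are quotients of the rank-one free module $\K_0^{G\times T}(S^{\notensor{V}{z}})$ on the Bott class $\beta$ (note $(\chi(\notensor{V}{z}))\subseteq(\chi(\notensor{\alpha}{z}))$ because $\chi(\notensor{V}{z})=\chi(\notensor{W}{z})\chi(\notensor{\alpha}{z})$); the element $\frac{1}{\chi(\notensor{V}{z})}$ is the image of $\beta$, and $i^{Gx}_*$ is the induced quotient map, so its image is an $R(G\times T)$-generator with no unit argument needed. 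Note also that you aim directly at $R(G)$-generation of $\K_0^G(S^{W\otimes\alpha^{-1}})$; that is legitimate (and stronger, since $R(G)\subseteq R(G\times T)$), but the identification of the $z$-action as multiplication by $\alpha^{-1}$, which you gloss as ``killing $z$ appropriately via the Adams-type identification'', is precisely the extra content the paper isolates in Lemma \ref{lemma:step 2} and proves there by a change-of-groups argument; it cannot be waved through at this stage.
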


\begin{proof}
The point $x$ represents a line in $V$, and since it is fixed, this is
 a 1-dimensional representation $\alpha$ of $G$, and we have
$V\cong W\oplus\alpha$ for some $W$. Thus
$$
\C P(V)\setminus Gx=\C P(V)\setminus\C P(\alpha)\simeq \C P(W),
$$
so we are required to prove that $K_{-1}^{G\times T}(S(\notensor{V}{z}),S(\notensor{W}{z})))$ is $R(G\times T)$-generated by $i^{Gx}_*(\frac{1}{\chi(\notensor{V}{z})})$.

We have a commutative diagram
$$
\begin{xy}
\xymatrix{
&&0\ar[d]\\
0\ar[r]&\K_0^{G\times T}(S^0)\ar[r]^-{\chi(\notensor{W}{z})}\ar@{=}[d]&\K_0^{G\times T}(S^{\notensor{W}{z}})\ar[r]\ar[d]^{\chi(\notensor{\alpha}{z})}&\K_{-1}^{G\times T}(S(\notensor{W}{z})_+)\ar[r]\ar[d]&0\\
0\ar[r]&\K_0^{G\times T}(S^0)\ar[r]^-{\chi(\notensor{V}{z})}&\K_0^{G\times T}(S^{\notensor{V}{z}})\ar[r]^-{a}\ar[d]_{b}&\K_{-1}^{G\times T}(S(\notensor{V}{z})_+)\ar[r]\ar[d]^{c}&0\\
&&K_{-1}^{G\times T}(S(\notensor{V}{z}),S(\notensor{W}{z}))\ar[d]\ar@{=}[r]&K_{-1}^{G\times T}(S(\notensor{V}{z}),S(\notensor{W}{z}))\\
&&0
}
\end{xy}
$$
in which the rows and columns are exact. (The rows are (\ref{equation:homology ses}), the centre column is the homology sequence of the $G$-triple $(D(\notensor{V}{z}),S(\notensor{V}{z}),S(\notensor{W}{z}))$ and the right-hand column comes from the $G$-pair $(S(\notensor{V}{z}),S(\notensor{W}{z}))$.)
Writing $\beta$ for the Bott class in $\K_0^{G\times T}(S^{\notensor{V}{z}})$, we must show that
$ca(\beta)=b(\beta)$ is an $R(G\times T)$-generator. This is clear, since $\beta$
obviously $R(G\times T)$-generates $\K_0^{G\times T}(S^{\notensor{V}{z}})$.
\end{proof}

\begin{lemma}\label{lemma:step 2}
Under the hypothesis of Lemma \ref{lemma:step 1}, $i^{Gx}_*(\frac{1}{\chi(\notensor{V}{z})})$ is an $R(G)$-generator for $K_0^G(\C P(V),\C P(V)\setminus Gx)$.
\end{lemma}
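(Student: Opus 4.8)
The plan is to leverage Lemma \ref{lemma:step 1}, which already identifies $\eta:=i^{Gx}_*(\frac{1}{\chi(\notensor{V}{z})})$ as a generator of $N:=K_0^G(\C P(V),\C P(V)\setminus Gx)$ over $R(G\times T)=R(G)[z,z^{-1}]$, and to upgrade this to $R(G)$-generation by showing that $z$ acts on $N$ invertibly --- in fact as multiplication by a unit of $R(G)$. Granting that, we obtain $R(G\times T)\cdot\eta=R(G)[z^{\pm1}]\cdot\eta=R(G)\cdot\eta=N$, which is exactly the assertion.

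To see that $z$ acts invertibly I would first pin down $N$ as an $R(G)$-module independently of the $G\times T$-picture. Since $x$ is $G$-fixed, the orbit $Gx$ is the single point $x$, so $G_x=G$, and --- writing $V\cong W\oplus\alpha$ as in Lemma \ref{lemma:step 1} --- the tangent representation is $V_x=\hom_\C(\alpha,W)\cong W\otimes\alpha^{-1}$. Then (\ref{equation:G excision outcome}) gives $N\cong\widetilde{K}_0^G(S^{W\otimes\alpha^{-1}})$, which by the homology form of equivariant Bott periodicity (the dual of the statement recalled after Definition \ref{definition:G-fundamental class}) is free of rank one over $R(G)=K_0^G$. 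Now $N$ is simultaneously a module over $R(G\times T)$ --- this is how it was computed in Lemma \ref{lemma:step 1}, via $N\cong K_{-1}^{G\times T}(S(\notensor{V}{z}),S(\notensor{W}{z}))$ --- and restriction of scalars along $R(G)\hookrightarrow R(G\times T)$ returns the $R(G)$-module structure just described. Because $N$ is free of rank one over $R(G)$, multiplication by $z$ is an $R(G)$-linear automorphism of $N\cong R(G)$, hence multiplication by some unit $u\in R(G)^\times$. (Concretely $u=\alpha^{\pm1}$: under $K^0_G(\C P(V))\cong R(G)[z^{\pm1}]/(\chi(\notensor{V}{z}))$ the class $z$ is the tautological line bundle, whose restriction to $x$ is $\alpha$, and the module structure on $N$ factors through restriction to $x$; but the precise value of $u$ is not needed.)

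The only real obstacle is the bookkeeping in the previous paragraph: one must check that the $R(G\times T)$-module structure on $K_{-1}^{G\times T}(S(\notensor{V}{z}),S(\notensor{W}{z}))$ used in Lemma \ref{lemma:step 1} is compatible, through the chain of excision and Adams isomorphisms of Lemma \ref{lemma:G-excision leading to fundamental class} and (\ref{equation:G excision outcome}), with the $R(G)$-module structure on $\widetilde{K}_0^G(S^{W\otimes\alpha^{-1}})$. This is a routine naturality verification --- every map in that chain is a map of modules over the evident coefficient ring, and $R(G)\to R(G\times T)$ is the relevant ring homomorphism --- but it is the one place where care is required. With it in hand the lemma follows immediately as in the first paragraph.
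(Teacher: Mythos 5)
Your argument is correct, but it takes a genuinely different route from the paper. The paper works in cohomology (``where the module structure is transparent'') and proves the sharper, explicit statement that $z$ acts on $K^0_{G\times T}(S(\notensor{V}{z}),S(\notensor{W}{z}))$ exactly as $\alpha^{-1}\in R(G)$: it writes $S(\notensor{V}{z})/S(\notensor{W}{z})\simeq S(\notensor{\alpha}{z})_+\Smash S^{\notensor{W}{z}}$, identifies $S(\notensor{\alpha}{z})\cong (G\times T)/\kappa$ with $\kappa=\ker(\notensor{\alpha}{z})$, passes to $K^0_\kappa(S^{\notensor{W}{z}})$ by change of groups, and transports along $f\colon G\cong\kappa$, $f(g)=(g,\alpha(g)^{-1})$, checking compatibility of module structures in an explicit commutative diagram; the homology statement then follows by duality. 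You instead stay in homology and argue abstractly: $z$ is invertible in $R(G\times T)$, so it acts as an $R(G)$-linear automorphism of $N=K_0^G(\C P(V),\C P(V)\setminus Gx)$, and since $N$ is free of rank one over $R(G)$ (excision plus complex stability/Bott, via (\ref{equation:G excision outcome}) with $V_x\cong W\otimes\alpha^{-1}$), that automorphism is multiplication by a unit of $R(G)$; hence $R(G\times T)\cdot\eta=R(G)\cdot\eta$ and Lemma \ref{lemma:step 1} finishes the job. This is slicker and does not require identifying the unit (your parenthetical $u=\alpha^{\pm1}$ agrees with the paper's $\alpha^{-1}$), but it shifts all the real content onto the compatibility you flag at the end: that the $R(G\times T)$-structure used in Lemma \ref{lemma:step 1} (through the Adams isomorphism and the suspension shift) restricts, along $R(G)\subset R(G\times T)$, to the natural $R(G)$-structure on $N$ carried across the excision chain of Lemma \ref{lemma:G-excision leading to fundamental class}. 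That verification is exactly what the paper's diagram with $\theta$, $f^*$ and the module map $m$ is doing concretely, so you should not dismiss it as entirely routine; but granted it, your proof is complete and arguably more economical, at the cost of losing the explicit description of the action that the paper's version makes visible.
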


\begin{proof}
We work in cohomology, where the module structure is transparent, and the result in homology
 follows via duality. Our proof now amounts to showing that the action of
$z\in R(G\times T)$ on $K^0_{G\times T}(S(\notensor{V}{z}),S(\notensor{W}{z}))$ is the same as that
of $\alpha^{-1}\in R(G)$. We have an equivalence
$$
S(\notensor{V}{z})/S(\notensor{W}{z})\simeq S(\notensor{\alpha}{z})_+\Smash S^{\notensor{W}{z}},
$$
and writing $\kappa=\ker{(\notensor{\alpha}{z})}$ we have $S(\notensor{\alpha}{z})\cong(G\times T)/\kappa$
so we may work in $K^0_\kappa(S^{\notensor{W}{z}})$. Finally, we identify $\kappa$ with $G$ by
the isomorphism $f:G\stackrel{\cong}{\lra}\kappa$ defined by $f(g)=(g,\alpha (g)^{-1})$.
Thus $f^*(\notensor{W}{z})=\notensor{W}{\alpha^{-1}}$. The result now follows by considering the
 commutative diagram
$$
\begin{xy}
\xymatrix{
\K^0_{G\times T}\times \K^0_{G\times T}((G\times T)/\kappa_+\Smash S^{\notensor{W}{z}})\ar[rr]^-{m}\ar[d]_{\res{G\times T}{\kappa}\times\theta^{-1}}&&\K^0_{G\times T}((G\times T)/\kappa_+\Smash S^{\notensor{W}{z}})\ar[d]_\cong^{\theta^{-1}}\\
\K^0_\kappa\times \K^0_\kappa(S^{\notensor{W}{z}})\ar[rr]^-{m}\ar[d]_{f^*}^\cong&&\K^0_\kappa(S^{\notensor{W}{z}})\ar[d]^{f^*}_\cong\\
\K^0_G\times \K^0_G(S^{\notensor{W}{\alpha^{-1}}})\ar[rr]^-{m}&&\K^0_G(S^{\notensor{W}{\alpha^{-1}}}),
}
\end{xy}
$$
in which $m$ is the module structure.
\end{proof}

\begin{lemma}\label{lemma:step 3}
Suppose $x\in \C P(V)$ has isotropy $H=G_x<G$. Given $B\subseteq A\subseteq \C P(V)$, write $i^A_B$ for the inclusion of $G$-pairs
$$
i^A_B:(\C P(V),\C P(V)\setminus A)\lhra(\C P(V),\C P(V)\setminus B).
$$
Writing $i^B$ for $i^{\C P(V)}_B$, we have a commutative diagram
\begin{equation}\label{equation:final diagram}
\begin{minipage}{\textwidth}
\begin{xy}
\xymatrix{
K_0^G(\C P(V))\ar[rr]^-{i^{Gx}_*}\ar[d]_{\res{G}{H}}&&K_0^G(\C P(V),\C P(V)\setminus Gx)\ar[d]^{\res{G}{H}}\ar[rr]^-{\cong}_-{\mbox{{\scriptsize (\ref{equation:G excision outcome})}}}&&K_0^{H}\ar@{=}[dd]\\
K_0^{H}(\C P(V))\ar@{=}[d]\ar[rr]^-{i^{Gx}_*}&&K_0^{H}(\C P(V),\C P(V)\setminus Gx)\ar[d]^{(i^{Gx}_{Hx})_*}\\
K_0^{H}(\C P(V))\ar[rr]^-{i^{Hx}_*}&&K_0^{H}(\C P(V),\C P(V)\setminus Hx)\ar[rr]^-{\cong}_-{\mbox{{\scriptsize (\ref{equation:G excision outcome})}}}&&K_0^{H}.
}
\end{xy}
\end{minipage}
\end{equation}
\end{lemma}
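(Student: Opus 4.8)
The claim is entirely a compatibility/naturality statement: the big rectangle commutes because it is assembled from smaller squares, each of which commutes for a structural reason. So the plan is simply to identify those squares and cite the reason each one commutes.

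First I would treat the top-left square, i.e. the square
$$
\begin{xy}
\xymatrix{
K_0^G(\C P(V))\ar[r]^-{i^{Gx}_*}\ar[d]_{\res{G}{H}}&K_0^G(\C P(V),\C P(V)\setminus Gx)\ar[d]^{\res{G}{H}}\\
K_0^{H}(\C P(V))\ar[r]^-{i^{Gx}_*}&K_0^{H}(\C P(V),\C P(V)\setminus Gx).
}
\end{xy}
$$
This commutes by naturality of $\res{G}{H}$ with respect to the map of $G$-pairs $(\C P(V),\emptyset)\lhra(\C P(V),\C P(V)\setminus Gx)$; concretely, using Lemma \ref{resrep} the vertical maps are represented by smashing with $D(\pi)\colon G/H_+\to G/G_+$ and the horizontal maps are induced by the space-level inclusion, and these commute because the two operations act on disjoint smash factors. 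Next I would observe that the lower-left square is trivial: its left vertical is the identity of $K_0^H(\C P(V))$, and the composite $K_0^H(\C P(V),\C P(V)\setminus Gx)\xrightarrow{(i^{Gx}_{Hx})_*}K_0^H(\C P(V),\C P(V)\setminus Hx)$ precomposed with $i^{Gx}_*$ is exactly $i^{Hx}_*$, since $i^{Gx}_{Hx}\circ i^{Gx}=i^{Hx}$ as maps of $G$-pairs (here $Hx\subseteq Gx$, so $\C P(V)\setminus Gx\subseteq \C P(V)\setminus Hx$). That disposes of the left half.

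For the right half I would unwind the identification (\ref{equation:G excision outcome}). Recall that it is the composite of the three isomorphisms of Lemma \ref{lemma:G-excision leading to fundamental class} followed by the change-of-groups isomorphism $\phi^{-1}$; and all three isomorphisms are built from excision, slice-neighbourhood homeomorphisms and a deformation retraction, all applied relative to the orbit of the chosen point. The key geometric input is that, restricted to $H=G_x$, the $G$-orbit $Gx$ and the $H$-orbit $Hx$ have \emph{the same} slice representation $V_x$, since the isotropy of $x$ is $H$ in either case; thus the right-hand column of (\ref{equation:final diagram}) amounts to comparing the excision/slice identification for $Gx$ (viewed as an $H$-space via $\res{G}{H}$) with the one for $Hx$, and these agree because they use the same slice $S^{V_x}$ and the same local model near $x$. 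I would make this precise by inserting, between the two copies of (\ref{equation:G excision outcome}), the intermediate groups $\widetilde{K}_0^{H}((G/H\cdot Gx)_+\Smash_{H}S^{V_x})$ appearing in step (iii) of Lemma \ref{lemma:G-excision leading to fundamental class} and noting that $\res{G}{H}$ of the $G$-version is visibly the $H$-version after the change-of-groups isomorphisms are untangled — this is again just Lemma \ref{resrep} together with the observation that $(G\times_{G_x}V_x)$ restricted to $H$ contains the $H$-slice $V_x$ at $x$ as the clopen piece meeting $Hx$.

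The main obstacle is purely bookkeeping: writing down explicitly what (\ref{equation:G excision outcome}) does on the nose and checking that $\res{G}{H}$ intertwines the $G$- and $H$-versions. None of the steps requires a new idea — everything reduces to naturality of excision, of the slice homeomorphism, and of change of groups (Lemma \ref{lemma:change of group} and Lemma \ref{resrep}) — but one must be careful that the subgroup being restricted to is exactly the isotropy $H=G_x$, so that the relevant slice representations literally coincide rather than merely being related by restriction. Once that is kept straight, the rectangle (\ref{equation:final diagram}) commutes by pasting the commuting squares above.
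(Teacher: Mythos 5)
Your proposal is correct and follows essentially the same route as the paper: the left-hand squares commute by naturality (and the trivial identity $i^{Gx}_{Hx}\circ i^{Gx}=i^{Hx}$), while the right-hand part is handled by writing out the isomorphism (\ref{equation:G excision outcome}) via Lemma \ref{lemma:G-excision leading to fundamental class} and observing that, since $H=G_x$, the slice representation $V_x$ is literally the same in the $G$- and $H$-pictures, so restriction and change of groups intertwine the two identifications. In fact you supply more detail than the paper's two-sentence proof does.
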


\begin{proof}
Commutativity of the left hand squares in the diagram is obvious by naturality.
For the right hand square,  use Lemma \ref{lemma:G-excision leading to fundamental class}
to write out the isomorphism (\ref{equation:G excision outcome}) in full.
\end{proof}

\begin{proof}[Proof of Theorem \ref{theorem:fundamental class}.]
Equivariant Bott periodicity means that we can work everywhere in degree zero. Let $x\in\C P(V)$. Suppose $x$ has isotropy $H\leq G$. By Lemma \ref{lemma:step 3} it suffices to show that $(i^{Hx})_*\res{G}{H}(\frac{1}{\chi(\notensor{V}{z})})$ is a generator. Now Lemma \ref{lemma:res(1/chi)=1/chi} allows us to use Lemma \ref{lemma:step 2} to complete the proof.
\end{proof}

\section{Calculations with the fundamental class}\label{section:calculations}

\subsection{The abelian world}\label{section:abelian}

For the time being, let us impose the restriction that $G$ be a finite {\em abelian} group $A$. Given an $n$-dimensional complex representation $V$ of $A$, we can write $V=\alpha_1\oplus\cdots\oplus\alpha_n$ for one dimensional summands $\alpha_i$. Following \cite{mc:tuoecb} we {\em choose} a complete flag
\begin{equation}\label{equation:flag}
\cur{F}=\left(0\subset V^1\subset V^2\subset\cdots\subset V^n=V\right)
\end{equation}
in which $V^i/V^{i-1}=\alpha_i$. This choice gives rise to an $R(A)$-basis
$$
\{1,y^{V^1},y^{V^2},\ldots,y^{V^{n-1}}\}
$$
for $K^0_A(\C P(V))$, in which
$$y^{V^i}=y^{\alpha_1}y^{\alpha_2} \cdots y^{\alpha_i} \mbox{ and }
y^{\alpha_j}=1-\alpha_jz.$$
We write $\{\beta_0^\cur{F},\ldots,\beta_{n-1}^\cur{F}\}$ for the dual $R(A)$-basis
for
$$K_0^A(\C P(V))\cong\hom_{R(A)}(K^0_A(\C P(V)),R(A)), $$
so that
$$
\beta_i^{\cur{F}}(y^{V^j})=\delta_i^j.
$$

\begin{theorem}\label{theorem:sum of betas is 1/chi}
The fundamental class is given by
\begin{equation}\label{equation:sum of betas is 1/chi}
\frac{1}{\chi(V\otimes z)}=\beta^\cur{F}_0+\cdots +\beta^\cur{F}_{n-1}.
\end{equation}
\end{theorem}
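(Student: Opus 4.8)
I would prove the equality of the two elements of $K_0^A(\C P(V))$ by checking their pairings against the cohomology basis. Recall from Lemma~\ref{lemma:homology and cohomology are dual} that $K_0^A(\C P(V))\stackrel{\cong}{\lra}\hom_{R(A)}(K^0_A(\C P(V)),R(A))$ is given by $h\mapsto\langle-,h\rangle$, where $\langle a,h\rangle=\epsilon_*(a\cap h)$ and $\epsilon\colon\C P(V)\to\ast$; under this identification the $\beta^{\cur F}_j$ are exactly the basis dual to the $y^{V^j}$, so every $h\in K_0^A(\C P(V))$ satisfies $h=\sum_j\langle y^{V^j},h\rangle\,\beta^{\cur F}_j$. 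Hence the theorem is equivalent to
$$\langle y^{V^j},\tfrac1{\chi(V\otimes z)}\rangle=1\qquad(j=0,\dots,n-1).$$

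\textbf{Making the pairing explicit and reducing.} The represented cap product agrees, under the presentations $K^0_A(\C P(V))=R(A\times T)/(\chi(V\otimes z))$ and $K_0^A(\C P(V))=\frac1{\chi(V\otimes z)}R(A\times T)/R(A\times T)$, with multiplication of fractions; since $y^{V^j}=\chi(V^j\otimes z)$ this gives $y^{V^j}\cap\frac1{\chi(V\otimes z)}=\frac{\chi(V^j\otimes z)}{\chi(V\otimes z)}=\frac1{\chi((V/V^j)\otimes z)}$, so $\langle y^{V^j},\frac1{\chi(V\otimes z)}\rangle=\epsilon_*\!\left(\frac1{\chi((V/V^j)\otimes z)}\right)$. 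Fixing a splitting $V\cong V^j\oplus(V/V^j)$, the inclusion $\C P(V/V^j)\hookrightarrow\C P(V)$ induces on $K$-homology the evident inclusion $\frac1{\chi((V/V^j)\otimes z)}R(A\times T)/R(A\times T)\hookrightarrow\frac1{\chi(V\otimes z)}R(A\times T)/R(A\times T)$; this I would read off from the map of short exact sequences~(\ref{equation:homology ses}) induced by the inclusions of unit spheres and representation spheres $S((V/V^j)\otimes z)\hookrightarrow S(V\otimes z)$, $S^{(V/V^j)\otimes z}\hookrightarrow S^{V\otimes z}$, on which the relevant map is multiplication by the Euler class $\chi(V^j\otimes z)$ of the complementary summand. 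Since that inclusion carries $\frac1{\chi((V/V^j)\otimes z)}=[\C P(V/V^j)]$ (Theorem~\ref{theorem:fundamental class}) to the class above, and $\epsilon_*$ commutes with it, the theorem reduces to the single assertion: for every complex representation $U$ of $A$, $\epsilon_*[\C P(U)]=1\in R(A)$, i.e. the push-forward of the fundamental class to a point is the trivial representation.

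\textbf{The last step, by localisation.} Write $U=\bigoplus_{i=1}^m\alpha_i$. Working over the localisation of $R(A)$ in which the differences $\alpha_i-\alpha_k$ $(i\neq k)$ are inverted, the $A$-fixed points $x_k=\C P(\alpha_k)$ of $\C P(U)$ have tangent representation $T_{x_k}=\bigoplus_{i\neq k}\alpha_i\alpha_k^{-1}$, and the partial fraction identity
$$\tfrac1{\chi(U\otimes z)}=\sum_{k=1}^m\tfrac1{\chi(T_{x_k})}\cdot\tfrac1{1-\alpha_kz}$$
expresses $[\C P(U)]$ in terms of the pushed-in point classes $\frac1{1-\alpha_kz}=(\iota_{x_k})_*[\ast]$ (again via the inclusion statement of the previous paragraph). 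Applying $\epsilon_*$ and using $\epsilon_*(\iota_{x_k})_*[\ast]=1$ gives $\epsilon_*[\C P(U)]=\sum_k 1/\chi(T_{x_k})=\sum_k\prod_{i\neq k}\frac{\alpha_k}{\alpha_k-\alpha_i}$, which is the leading coefficient of the degree-$(m-1)$ Lagrange interpolating polynomial of $t\mapsto t^{m-1}$ through $\alpha_1,\dots,\alpha_m$, hence $1$; this being a polynomial identity in the $\alpha_i$, it persists under specialisation when some of the $\alpha_i$ coincide, so $\epsilon_*[\C P(U)]=1$ in $R(A)$.

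\textbf{Where the difficulty lies.} Everything up to the reduction is bookkeeping with the short exact sequences already in the paper; the real content is the final identity $\epsilon_*[\C P(U)]=1$. The point is that the property of being a fundamental class pins the relevant local classes down only up to units of $R(A)$, so one genuinely needs a computation — localisation together with the Lagrange interpolation identity above, or equivalently a careful normalisation of the Thom and Bott classes in a sharpened form of Theorem~\ref{theorem:fundamental class} — to see that the resulting unit is $1$. (This is the $K$-theoretic shadow of ``the Todd genus of $\C P(U)$ is $1$'', but it should be established directly, since recovering the $K$-theory orientation is the whole purpose.)
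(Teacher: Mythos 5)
Your overall route is genuinely different from the paper's, and the reduction at its heart is correct: pairing $\frac{1}{\chi(V\otimes z)}$ against the basis elements $y^{V^j}=\chi(V^j\otimes z)$, identifying $y^{V^j}\cap\frac{1}{\chi(V\otimes z)}$ with the image of $[\C P(V/V^j)]$ under the inclusion $\C P(V/V^j)\hookrightarrow\C P(V)$, and thereby reducing the theorem to the single statement $\epsilon_*[\C P(U)]=1\in R(A)$ for all $U$. The paper instead argues by induction on $\dim_\C V$, pushing forward the fundamental classes of the two hyperplanes $\C P(V/z_n)$ and $\C P(V/z_{n-1})$, taking the combination that produces $(1-z_{n-1}z_n^{-1})(\beta_0+\cdots+\beta_{n-1})$, and cancelling the non-zero-divisor $1-z_{n-1}z_n^{-1}$. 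The facts you use about the module structure and about pushforwards being ``multiplication by the complementary Euler class'' are the same ones the paper asserts (its identity $(j_n)_*(\iota_n)=\chi(z_n\otimes z)/\chi(V\otimes z)$), so you are on an equal footing of rigour there; your reduction has the merit of isolating the clean statement that the $K$-theoretic pushforward of the fundamental class to a point is $1$ (``Todd genus $1$''), which the paper's induction leaves implicit.

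There is, however, a genuine gap in your final step. For a finite abelian group $A$ the differences $\alpha_i-\alpha_k$ are typically zero divisors in $R(A)=\mathbb{Z}[A^*]$ (for instance $(1-\sigma)(1+\sigma)=0$ in $R(C_2)$), so the localisation map $R(A)\lra R(A)[(\alpha_i-\alpha_k)^{-1}]$ is not injective, and if $U$ contains a repeated character the localisation is the zero ring; hence verifying $\epsilon_*[\C P(U)]=1$ after inverting these elements does not verify it in $R(A)$. The appeal to ``persists under specialisation'' cannot repair this as written, because the topologically defined element $\epsilon_*[\C P(U)]\in R(A)$ has not been exhibited as the specialisation of a universal polynomial: the Lagrange identity is universal, but the equation $\epsilon_*[\C P(U)]=\sum_k\prod_{i\neq k}\frac{\alpha_k}{\alpha_k-\alpha_i}$ was only derived in the (possibly non-faithful, possibly zero) localisation. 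The fix is to prove the statement first in the universal case $A=T^m$, $U=z_1\oplus\cdots\oplus z_m$, where $R(T^m)=\mathbb{Z}[z_1^{\pm1},\ldots,z_m^{\pm1}]$ is a domain and the differences $z_i-z_k$ are non-zero-divisors, so that your partial-fraction and interpolation computation is a complete proof; then pull back along $\boldalpha\colon A\lra T^m$, $\boldalpha(a)=(\alpha_1(a),\ldots,\alpha_m(a))$, using that $\frac{1}{\chi(U\otimes z)}$ and $\epsilon_*$ are natural for restriction along group homomorphisms (the analogue, for $\boldalpha$, of Lemma \ref{lemma:res(1/chi)=1/chi}). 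This universal-case reduction is precisely the opening move of the paper's own proof; once you insert it, your argument goes through.
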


\begin{remarks} \label{remarks:summary}
\begin{enumerate}[(i)]
\item Since the left hand side of (\ref{equation:sum of betas is 1/chi})
is a topological invariant
of $V$, so too is the right hand side and we may abbreviate  to
$\beta_0+\cdots+\beta_{n-1}$ without ambiguity. It is striking that
although the individual $\beta_i^\cur{F}$ depend on the flag $\cur{F}$, this
sum does not.
\item This generalises Adams's classical identification of the (non-equivariant)
$K$-theory fundamental class \cite[Theorem III.11.15]{ja:shagh}, and provides
a more elementary
proof (Adams's alternating signs arise by choosing the opposite orientation).
\item\label{remarks:summary:enumi:3} One can give a direct, algebraic proof that
$\beta_0+\cdots+\beta_{n-1}$ is independent of flag, without relating it to
$\frac{1}{\chi(V\otimes z)}$. We refer to \cite[Proposition 3.5.13]{gw:pdiektfc} for
details.  Furthermore, in  \cite{gw:pdiektfc} it is shown directly that taking the cap
product with $\beta_0+\cdots+\beta_{n-1}$ gives a duality isomorphism.
%It may or may not prove interesting to investigate whether one can directly show that $\beta_0+\cdots+\beta_{n-1}$ is a fundamental class.
\end{enumerate}
\end{remarks}

\begin{proof}[Proof of Theorem \ref{theorem:sum of betas is 1/chi}]
It suffices to prove the result if $A$ is the $n$-torus $T^n$ and $V=z_1\oplus\cdots\oplus z_n$, where $z_i(\lambda_1,\ldots,\lambda_n)=\lambda_i$. This is because the pullback of $z_1\oplus\cdots\oplus z_n$ along the homomorphism $\boldalpha:A\lra T^n$, in which $\boldalpha(a)=(\alpha_1(a),\ldots,\alpha_n(a))$, is $\alpha_1\oplus\cdots\oplus\alpha_n$.

The proof proceeds by induction on $n=\dim_\C(V)$. The initial step is obvious, so now suppose the theorem holds for representations of dimension smaller than $n>1$. For $1\leq i\leq n$, we have $T^n$-inclusions
$$
j_i:\C P(z_1\oplus\cdots\oplus z_{i-1}\oplus z_{i+1}\oplus\cdots\oplus z_n)\lhra\C P(V),
$$
and we write
\begin{eqnarray*}
\iota_i&=&\frac{1}{\chi((z_1\oplus\cdots\oplus z_{i-1}\oplus z_{i+1}\oplus\cdots\oplus z_n)\otimes z)}\\
&\in&K_0^{T^n}(\C P(z_1\oplus\cdots\oplus z_{i-1}\oplus z_{i+1}\oplus\cdots\oplus z_n)).
\end{eqnarray*}

Writing $\langle-,-\rangle$ for the Kronecker pairing we have
\begin{eqnarray}
\nonumber\langle y^{z_1}\cdots y^{z_i},(j_n)_*(\iota_n)\rangle&=&\langle(j_n)^*(y^{z_1}\cdots y^{z_i}),\iota_n\rangle\\
\nonumber&=&\langle y^{z_1}\cdots y^{z_i},\iota_n\rangle\\
\label{equation:using inductive hypothesis}&=&\left\{\begin{array}{ll}1&0\leq i\leq n-2\\0&i=n-1\end{array}\right.,
\end{eqnarray}
from which
\begin{equation}\label{equation:first inductive}
(j_n)_*(\iota_n)=\beta_0^\cur{F}+\cdots+\beta_{n-2}^\cur{F}.
\end{equation}
In the final step of (\ref{equation:using inductive hypothesis}), we use the inductive
hypothesis for $0\leq i \leq n-2$, and for $i=n-1$, the fact that 
$y^{z_1}y^{z_2}\cdots y^{z_{n-1}}=0$. Similarly, one finds that
\begin{equation}\label{equation:second inductive}
(j_{n-1})_*(\iota_{n-1})=\beta_0^\cur{F}+\cdots+\beta_{n-2}^\cur{F}+(1-z_{n-1}z_n^{-1})\beta_{n-1}^\cur{F}.
\end{equation}
Taking a linear combination of 
 (\ref{equation:first inductive}) and (\ref{equation:second inductive}), we find
$$
(j_{n-1})_*(\iota_{n-1})-z_{n-1}z_n^{-1}(j_{n})_*(\iota_{n})=
(1-z_{n-1}z_n^{-1})(\beta_0^\cur{F}+\cdots+\beta_{n-1}^\cur{F})
$$
We now simplify the left hand side, using the fact that
$$
(j_n)_*(\iota_n)=\frac{1}{\chi ((V/z_n)\otimes z)}=
\frac{\chi(z_n\otimes z)}{\chi(V\otimes z)}
$$
and similarly for $(j_{n-1})_*(\iota_{n-1})$.
Since
$$
\frac{\chi(z_{n-1}\otimes z)}{\chi(V\otimes z)}-
z_{n-1}z_n^{-1}\frac{\chi(z_n\otimes z)}{\chi(V\otimes z)}=
(1-z_{n-1}z_n^{-1})\frac{1}{\chi(V\otimes z)}, 
$$
we obtain
$$
(1-z_{n-1}z_n^{-1})\frac{1}{\chi(V\otimes z)}=
(1-z_{n-1}z_n^{-1})(\beta_0^\cur{F}+\cdots+\beta_{n-1}^\cur{F}).
$$
The result follows, since $1-z_{n-1}z_n^{-1}$ is not a zero divisor in $R(T^n\times T)$.
\end{proof}

\subsection{The non-abelian world}\label{section:non-abelian}

The proofs of \S\ref{section:abelian} break down in the non-abelian case because $V$ may not have a decomposition into one-dimensional representations and we cannot choose a flag as in (\ref{equation:flag}).

\begin{notation}
Recall that $K^0_G(\C P(V))\cong R(G)[z]/(\chi(V\otimes z))$ (irrespective of whether $G$ is abelian). Observe that
$$
\cur{B}=\{(1-z)^i~|~0\leq i\leq n-1\}
$$
is always a basis for $K^0_G(\C P(V))$. (Whereas the construction of \S
 \ref{section:abelian}
gives a basis for any complex orientable theory, the fact that $\cur{B}$ gives a basis
is a special feature of $K$-theory).
We write $\{\beta_0^\cur{B},\ldots,\beta_{n-1}^\cur{B}\}$ for the corresponding dual basis for $K_0^G(\C P(V))$.
\end{notation}

Happily, it turns out that in the abelian case, the  explicit proof mentioned in
Remarks \ref{remarks:summary} (\ref{remarks:summary:enumi:3})
gives
\begin{equation}\label{equation:betas and cur(B)}
\frac{1}{\chi(V\otimes z)}=\sum\limits_{i=0}^{n-1}\beta_i=\sum\limits_{i=0}^{n-1}\beta_i^\cur{B}.
\end{equation}

\begin{lemma}\label{lemma:res(beta)=beta}
If $H\leq G$ then we have $\res{G}{H}(\beta_i^\cur{B})=\beta_i^\cur{B}$ for $i=0,\ldots,n-1$.\qed
\end{lemma}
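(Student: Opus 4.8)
The plan is to compare the two dual bases by pairing them against the cohomology basis $\cur{B}$ and exploiting naturality of the Kronecker pairing under restriction. Recall from the Notation preceding the lemma that, under the identification $K^0_G(\C P(V))\cong R(G)[z]/(\chi(\notensor{V}{z}))$, the set $\cur{B}=\{(1-z)^i\mid 0\leq i\leq n-1\}$ is an $R(G)$-basis, and the same description with $H$ in place of $G$ shows $\cur{B}$ is an $R(H)$-basis for $K^0_H(\C P(V))$. Under these identifications the cohomology restriction $\res{G}{H}\colon K^0_G(\C P(V))\to K^0_H(\C P(V))$ is the map induced by the restriction homomorphism $R(G)\to R(H)$, which fixes $z$; hence $\res{G}{H}((1-z)^i)=(1-z)^i$, so restriction carries the cohomology basis $\cur{B}$ to the cohomology basis $\cur{B}$.

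Next I would invoke the naturality of the Kronecker pairing with respect to restriction. Since restriction on cohomology is a ring map, restriction on homology is a module map over it (Lemma \ref{resrep}), and both are represented compatibly with the multiplication on $K$ via the change-of-groups formalism of \S2, for any $c\in K^0_G(\C P(V))$ and $h\in K_0^G(\C P(V))$ one has
$$
\res{G}{H}\langle c,h\rangle=\langle\res{G}{H}(c),\res{G}{H}(h)\rangle\in R(H).
$$
Taking $c=(1-z)^j$ and $h=\beta_i^\cur{B}$ and using the previous paragraph gives, for all $0\leq i,j\leq n-1$,
$$
\langle(1-z)^j,\res{G}{H}(\beta_i^\cur{B})\rangle=\res{G}{H}\langle(1-z)^j,\beta_i^\cur{B}\rangle=\res{G}{H}(\delta_i^j)=\delta_i^j
$$
in $R(H)$.

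Finally, Lemma \ref{lemma:homology and cohomology are dual}, applied with $H$ in place of $G$, identifies $K_0^H(\C P(V))$ with $\hom_{R(H)}(K^0_H(\C P(V)),R(H))$; since $\cur{B}$ is an $R(H)$-basis of $K^0_H(\C P(V))$, there is a unique element of $K_0^H(\C P(V))$ pairing to $\delta_i^j$ against $(1-z)^j$ for every $j$, namely $\beta_i^\cur{B}$ by definition of the dual basis. The displayed computation exhibits $\res{G}{H}(\beta_i^\cur{B})$ as such an element, whence $\res{G}{H}(\beta_i^\cur{B})=\beta_i^\cur{B}$. The statement is essentially formal once the bookkeeping is set up; the only point that needs a careful word is the naturality of the Kronecker pairing under restriction, which I would deduce from Lemma \ref{resrep} together with the fact that passage from $G$-spectra to underlying $H$-spectra is symmetric monoidal, so that the smash products and multiplication $\mu$ defining the cap/Kronecker pairing are preserved. (Alternatively one can give the same argument purely algebraically, using the $\ext$-description of the remark in \S4.2 and the fact that the UCT isomorphism there is natural in the group, but the pairing argument is shorter and self-contained.)
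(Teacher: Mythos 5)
Your proof is correct and takes essentially the approach the paper itself indicates for this lemma, namely the interaction of restriction with the Kronecker pairing: restriction fixes the basis $\cur{B}$, the pairing is natural under restriction, and the identification $K_0^H(\C P(V))\cong\hom_{R(H)}(K^0_H(\C P(V)),R(H))$ of Lemma \ref{lemma:homology and cohomology are dual} then pins down $\res{G}{H}(\beta_i^\cur{B})$ as the dual basis element. The paper omits the details, deferring to \cite[\S4.4]{gw:pdiektfc}, and your write-up supplies essentially that argument.
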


\noindent The proof involves considering the interaction of restriction with the Kronecker pairing. Details may be found in \cite[\S4.4]{gw:pdiektfc}.

\begin{theorem}
Let $V$ be a complex representation, $\dim_\C{V}=n$, of the finite group $G$. Take $\cur{B}=\{(1-z)^i\}_{i=0}^{n-1}$ as a basis for $K^0_G(\C P(V))$ and let the dual basis for $K_0^G(\C P(V))$ be $\{\beta_i^\cur{B}\}_{i=0}^{n-1}$. Then
$$
\sum\limits_{i=0}^{n-1}\beta_i^\cur{B}=\frac{1}{\chi(V\otimes z)}.
$$
\end{theorem}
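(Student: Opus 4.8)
The plan is to reduce the non-abelian statement to the abelian one already established in Theorem~\ref{theorem:fundamental class} together with the abelian computation in \eqref{equation:betas and cur(B)}, using the injectivity of restriction to cyclic subgroups from \eqref{equation:injective restriction}. Both sides of the claimed identity live in $K_0^G(\C P(V))$, so it suffices to show they agree after applying $\res{G}{H}$ for every cyclic (hence abelian) subgroup $H\leq G$.

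First I would treat the right-hand side: by Lemma~\ref{lemma:res(1/chi)=1/chi} we have $\res{G}{H}\!\left(\tfrac{1}{\chi(Vz)}\right)=\tfrac{1}{\chi(Vz)}$, so the fundamental class restricts to the fundamental class of $\C P(V)$ viewed as an $H$-space (for $V$ restricted to $H$). Next I would treat the left-hand side: by Lemma~\ref{lemma:res(beta)=beta}, $\res{G}{H}(\beta_i^\cur{B})=\beta_i^\cur{B}$, where on the right $\beta_i^\cur{B}$ is the dual basis element in $K_0^H(\C P(V))$ associated to the basis $\cur{B}=\{(1-z)^i\}$ of $K^0_H(\C P(V))$ --- note that $\cur{B}$ is manifestly a basis over $R(H)$ by the same presentation $K^0_H(\C P(V))\cong R(H)[z]/(\chi(Vz))$, so the restricted statement makes sense. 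Summing over $i$, $\res{G}{H}\!\left(\sum_i\beta_i^\cur{B}\right)=\sum_i\beta_i^\cur{B}$ in $K_0^H(\C P(V))$.

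Now, since $H$ is abelian, \eqref{equation:betas and cur(B)} applies and gives $\sum_{i=0}^{n-1}\beta_i^\cur{B}=\tfrac{1}{\chi(Vz)}$ in $K_0^H(\C P(V))$. Hence the two restrictions agree for every cyclic $H$, and by the injectivity of $\res{G}{*}$ in homology (the analogue of \eqref{equation:injective restriction}, valid since $K_0^G(\C P(V))$ is free over $R(G)$ on generators mapping to each other under restriction) the two elements of $K_0^G(\C P(V))$ coincide.

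The only genuine subtlety --- the ``hard part'' --- is making sure the abelian input \eqref{equation:betas and cur(B)} is genuinely available, i.e.\ that $\sum\beta_i^\cur{B}=\tfrac{1}{\chi(Vz)}$ has really been established for abelian groups and not merely for the flag-basis $\beta_i^\cur{F}$; this is exactly the content of the remark following Lemma~\ref{lemma:res(beta)=beta}, which identifies $\sum\beta_i^\cur{B}$ with $\sum\beta_i^\cur{F}$ in the abelian case and hence with the fundamental class via Theorem~\ref{theorem:sum of betas is 1/chi}. Everything else is a formal diagram chase through the naturality of the Kronecker pairing under restriction, already packaged in Lemmas~\ref{lemma:res(1/chi)=1/chi} and~\ref{lemma:res(beta)=beta}.
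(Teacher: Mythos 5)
Your proposal is correct and follows essentially the same route as the paper: restrict to cyclic subgroups, use Lemma \ref{lemma:res(1/chi)=1/chi} and Lemma \ref{lemma:res(beta)=beta} to see both sides restrict to themselves, invoke the abelian identity (\ref{equation:betas and cur(B)}) to equate the restrictions, and conclude by injectivity of $\res{G}{*}$ in homology. You also correctly flag that the real content lies in the abelian identification of $\sum_i\beta_i^{\cur{B}}$ with $\frac{1}{\chi(V\otimes z)}$, which is exactly the input the paper takes from (\ref{equation:betas and cur(B)}).
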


\begin{proof}
We use Lemma \ref{lemma:res(beta)=beta} to see that $\res{G}{H}(\sum\limits_{i=0}^{n-1}\beta_i^\cur{B})=\sum\limits_{i=0}^{n-1}\beta_i^\cur{B}$ and Lemma \ref{lemma:res(1/chi)=1/chi} to see that $\res{G}{H}(\frac{1}{\chi(V\otimes z)})=\frac{1}{\chi(V\otimes z)}$ for each $H\leq G$. Taking the product over cyclic subgroups, and using (\ref{equation:betas and cur(B)}),
$$
\res{G}{*}(\sum\limits_{i=0}^{n-1}\beta_i^\cur{B})=\res{G}{*}(\frac{1}{\chi(V\otimes z)}).
$$
The theorem now follows from the injectivity of $\res{G}{*}$.
\end{proof}

\subsection{Perfect pairings}

Recall that if $M,N$ are modules over the commutative ring $R$ then a bilinear map $b:M\otimes N\lra R$ is a {\em perfect pairing} if
$$\begin{array}{ccc}
M & \lra &\hom_R(N,R)\\
m & \lmt &(n\lmt b(m\otimes n))
\end{array}
$$
defines an isomorphism of $R$-modules $M\stackrel{\cong}{\lra}\hom_R(N,R)$.

\begin{notation}\label{notation:pairing}
We define a pairing $\lceil-,-\rceil:K^0_G(\C P(V))\otimes K^0_G(\C P(V))\lra R(G)$ by $\lceil x,y\rceil=\langle xy,\frac{1}{\chi(V\otimes z)}\rangle$.
\end{notation}

\begin{theorem}
The pairing
$$
\lceil-,-\rceil:K^0_G(\C P(V))\otimes K^0_G(\C P(V))\lra R(G)
$$
is perfect, and the corresponding isomorphism
$$
K^0_G(\C P(V))\stackrel{\cong}{\lra}\hom_{R(G)}(K^0_G(\C P(V)),R(G))=K_0^G(\C P(V))
$$
is a Poincar\'{e} duality isomorphism.
\end{theorem}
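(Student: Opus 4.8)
The plan is to show that the pairing $\lceil-,-\rceil$ is perfect by factoring the associated map through the Poincar\'{e} duality map of the general theorem, and then to deduce that this factorization produces exactly the cap-product-with-$[M]$ isomorphism. Concretely, I would first observe that by Notation \ref{notation:pairing}, the $R(G)$-linear map $K^0_G(\C P(V)) \lra \hom_{R(G)}(K^0_G(\C P(V)), R(G))$ induced by $\lceil-,-\rceil$ sends $x$ to the functional $y \mapsto \langle xy, \frac{1}{\chi(Vz)}\rangle$. Using the standard adjunction between cap and Kronecker products, $\langle xy, \frac{1}{\chi(Vz)}\rangle = \langle y, x \cap \frac{1}{\chi(Vz)}\rangle$, so this functional is precisely the image of $x \cap \frac{1}{\chi(Vz)} \in K_0^G(\C P(V))$ under the isomorphism $K_0^G(\C P(V)) \stackrel{\cong}{\lra} \hom_{K^*_G}(K^*_G(\C P(V)), K^*_G)$ supplied by Lemma \ref{lemma:homology and cohomology are dual} (restricted to degree zero, which is legitimate by Bott periodicity).

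With that identification in hand, the map induced by $\lceil-,-\rceil$ is the composite of $(-) \cap \frac{1}{\chi(Vz)}: K^0_G(\C P(V)) \lra K_0^G(\C P(V))$ with the UCT isomorphism of Lemma \ref{lemma:homology and cohomology are dual}. The second factor is an isomorphism by that lemma. The first factor is an isomorphism by the Poincar\'{e} duality theorem together with Theorem \ref{theorem:fundamental class}, which identifies $\frac{1}{\chi(Vz)}$ as a genuine $K$-theory fundamental class $[\C P(V)]$ — here one also uses that $K$-theory is complex stable (Equivariant Bott periodicity) and that $\C P(V)$ is a smooth $G$-manifold. A composite of two isomorphisms is an isomorphism, so $\lceil-,-\rceil$ is perfect; and by construction the resulting isomorphism $K^0_G(\C P(V)) \stackrel{\cong}{\lra} K_0^G(\C P(V))$ is, up to the UCT identification of the target, exactly capping with $[\C P(V)]$, hence is the Poincar\'{e} duality isomorphism.

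The only real subtlety — and the step I expect to need the most care — is the adjunction formula $\langle xy, h \rangle = \langle y, x \cap h\rangle$ relating the Kronecker pairing, the ring structure on cohomology, and the cap product, in the represented equivariant setting with its module structure over $K^*_G$. This is standard (it is essentially built into the definitions of the cap and Kronecker products via the diagonal and the ring spectrum multiplication recalled before the Poincar\'{e} duality theorem), but writing it out carefully in the $G$-spectrum language, keeping track of the degree-zero restriction and the identification $K_*^G = R(G)$ in even degrees, requires attention. Everything else is formal: the diagram chase assembling the two isomorphisms and checking that the composite agrees with the map named in the statement. I would present the argument as: (1) recall the adjunction; (2) identify the $\lceil-,-\rceil$-map as (UCT iso) $\circ$ ($(-)\cap[\C P(V)]$); (3) invoke Theorem \ref{theorem:fundamental class} and the Poincar\'{e} duality theorem for the second factor and Lemma \ref{lemma:homology and cohomology are dual} for the first; (4) conclude perfectness and that the induced map is Poincar\'{e} duality.
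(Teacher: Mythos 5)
Your proposal is correct and follows essentially the same route as the paper: identify the map induced by $\lceil-,-\rceil$ with $x\mapsto x\cap\frac{1}{\chi(V\otimes z)}$ via the UCT isomorphism of Lemma \ref{lemma:homology and cohomology are dual} and the definition of the cap product, then invoke Theorem \ref{theorem:fundamental class} and the Poincar\'e duality theorem. Your explicit isolation of the adjunction $\langle xy,h\rangle=\langle y,x\cap h\rangle$ is just a spelled-out version of the verification the paper describes as ``easily verified''.
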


\begin{proof}
One can show directly that $\lceil-,-\rceil$ is perfect in the abelian case, but it is far more satisfactory (and general) to observe that the map
$$
K^0_G(\C P(V))\stackrel{\cap_\xi}{\lra}\hom_{R(G)}(K^0_G(\C P(V)),R(G))=K_0^G(\C P(V)),
$$
in which $x\stackrel{\cap_\xi}{\lmt}(y\lmt\langle xy,\xi\rangle)$, is
capping with $\xi\in K_0^G(\C P(V))$ -- in other words $\cap_\xi(x)=x\cap\xi$. This is
easily verified, using Lemma \ref{lemma:homology and cohomology are dual} and the definition of the cap product.
\end{proof}

\section{Examples}

We conclude by explaining  how to compute the pairing $\lceil-,-\rceil$ of
Notation \ref{notation:pairing} for any $\C P(V)$. We make the results explicit
in dimensions $\leq 4$.

As observed above, $K^0_G(\C P(V))\cong R(G)[z]/\chi (Vz)$, and
we use the basis  $\{1,y,y^2, \ldots , y^{n-1}\}$ if $V$ is of dimension $n$,
where $y=1-z$.  As described above $\lceil a,b \rceil =\eps (ab)$
where
$$\eps (a_0 +a_1y +\cdots + a_{n-1}y^{n-1})=a_0+ a_1 + \cdots +a_{n-1}\in R(G).$$
Given $s\geq0$, we therefore need to find expressions for $y^{n+s}$ in terms of the basis:
in fact if
$$y^{n+s}=\sum_{j=0}^{n-1}\lambda^s_jy^j,$$
we will find recursive formulae for $\lambda^s_j$, and then
$$\lceil y^i,y^j \rceil =\eps (y^{i+j})=\lambda_0^s+ \cdots + \lambda_{n-1}^s,$$
if $i+j=n+s$.

We first apply the splitting principle to obtain a formula for $\chi (Vz)$ in
terms of $y$, and we use notation suggested by the theory of equivariant
formal group laws.  Indeed if $\alpha$ is one dimensional,
$$\chi (\alpha z)=1-\alpha z=e(\alpha)+\alpha y =\alpha (y-e(\alpha^{-1})),$$
where $e(\alpha) =1-\alpha$. Now, if $V=\alpha_1\oplus\cdots\oplus\alpha_{n}$ is a 
sum of one dimensional representations,
$$\det (V)^{-1} \chi (Vz)=\prod_{i=1}^n (y-e(\alpha_i^{-1}))=\sigma_n+\sigma_{n-1}y+
\cdots + \sigma_1y^{n-1}+y^n,$$
where we have used the elementary symmetric polynomials
$$\sigma_j=\sigma_j(-e(\alpha_1^{-1}), -e(\alpha_2^{-1}), \ldots, -e(\alpha_n^{-1})).$$
Since the $\sigma_j$ are symmetric, the coefficients can be expressed
in terms of exterior powers. Explicitly, writing $V^*$ for the dual representation of $V$, we have the formula
\begin{multline*}
\sigma_m=\lambda^{m}(V^*)-\bin{n-m+1}{n-m}\lambda^{m-1}(V^*)
+\bin{n-m+2}{n-m} \lambda^{m-2}(V^*) -\cdots\\
\cdots +(-1)^{m-1}\bin{n-1}{n-m}\lambda^{1}(V^*)+(-1)^m \bin{n}{n-m}.
\end{multline*}
Thus we have an equality
$$\det (V)^{-1} \chi (Vz)=\sigma_n+\sigma_{n-1}y+
\cdots + \sigma_1y^{n-1}+y^n, $$
between elements of $R(G\times T)$: we have verified it when $V$ is
a sum of one dimensional representations, and it therefore holds
in general by the splitting principle.

Thus the condition $\chi (Vz)=0$ is equivalent to
$$y^n=-(\sigma_n+\sigma_{n-1}y+ \cdots + \sigma_1y^{n-1}),$$
or $\lambda_j^0=-\sigma_{n-j}$.
Now
$$y^{n+s+1}=yy^{n+s}=\sum_{j=1}^{n-1}\lambda^s_{j-1}y^j -
\lambda_{n-1}^s\sum_{j=0}^{n-1}\sigma_{n-j}y^j, $$
or, interpreting $\lambda^s_{-1}$ as zero,
$$\lambda_j^{s+1}=\lambda_{j-1}^s-\lambda_{n-1}^s\sigma_{n-j}.$$
When adding up, it is useful to note that $1-e(\alpha)=\alpha$, so in particular
$$\det (V)^{-1}=1+\sigma_1+\sigma_2 + \cdots + \sigma_n.$$
Then we find
$$\eps (y^n)=1-\det(V)^{-1}.$$
Similarly,
$$\eps (y^{n+s+1})=\eps(y^{n+s})-\lambda_{n-1}^s\det(V)^{-1},$$
and an inductive argument then shows
$$\eps(y^{n+s})=1-\frac{1}{\det (V)}(1+\lambda_{n-1}^0+ \cdots +\lambda_{n-1}^{s-1}).$$
More explicitly, if we interpret $\sigma_{n+s}$ as zero for $s>0$,
$$\lambda_j^0=-\sigma_{n-j},\; \lambda_j^1=-\sigma_{n-j+1}+\sigma_1 \sigma_{n-j},\;
\lambda_j^2=-\sigma_{n-j+2}+\sigma_1 \sigma_{n-j+1}
+(\sigma_2-\sigma_1^2)\sigma_{n-j},$$
and so
\begin{align*}
\eps (y^n)&=1-\frac{1}{\det (V)},\\
\eps (y^{n+1})&=1-\frac{1}{\det (V)}(1-\sigma_1),\\
\eps (y^{n+2})&=1-\frac{1}{\det (V)}(1-(\sigma_1+\sigma_2)+\sigma_1^2),\\
\eps (y^{n+3})&=1-\frac{1}{\det (V)}(1-(\sigma_1+\sigma_2+\sigma_3)+
(2\sigma_1\sigma_2+\sigma_1^2)-\sigma_1^3).
\end{align*}

Below are the results of the pairing $\lceil-,-\rceil$ for $\C P (V)$
when $V$  is of small dimension. (For brevity, we write $\delta^*$ for
$\det (V^*)=1/\det (V)$).

\begin{center}
\begin{tabular}{|c|cc|}\hline
&$1$&$y$\\\hline
$1$&$1$&$1$\\
$y$&$1$&$1-\delta^*$\\\hline
\multicolumn{3}{c}{}\\
\multicolumn{3}{c}{Pairing for  $\dim (V)=2$}
\end{tabular}
\end{center}

\begin{center}
\begin{tabular}{|c|ccccc|}\hline
&$1$&&$y$&&$y^2$\\\hline
$1$&$1$&&$1$&&$1$\\
$y$&$1$&&$1$&&$1-\delta^*$\\
$y^2$&$1$&&$1-\delta^*$&&$1-\delta^*(4-V^*)$\\\hline
\multicolumn{6}{c}{}\\
\multicolumn{6}{c}{Pairing  for $\dim V=3$}
\end{tabular}
\end{center}

\begin{center}
\begin{tabular}{|c|ccccccc|}\hline
&$1$&&$y$&&$y^2$&&$y^3$\\\hline
$1$&$1$&&$1$&&$1$&&1\\
$y$&$1$&&$1$&&$1$&&$1-\delta^*$\\
$y^2$&$1$&&$1$&&$1-\delta^*$&&$1-\delta^*(5-V^*)$\\
$y^3$&$1$&&$1-\delta^*$&&$1-\delta^*(5-V^*)$&&$1-\delta^*(14-6V^*+(V^*)^2-\lambda^2(V^*))$\\
\hline
\multicolumn{8}{c}{}\\
\multicolumn{8}{c}{Pairing  for $\dim V=4$}
\end{tabular}
\end{center}

\end{document}